\newtheorem{theorem}{Theorem}[section]
\newtheorem{proposition}[theorem]{Proposition}
\newtheorem{lemma}[theorem]{Lemma}
\newtheorem{definition}[theorem]{Definition}
\newtheorem{claim}[theorem]{Claim}
\numberwithin{equation}{section}
\newcommand{\hatc}{\widehat{\,\,\,}}
\newcommand{\wtow}{{}^\omega\omega}
\newcommand{\infsets}{[\omega]^\omega}
\newcommand{\card}{\operatorname{Card}}
\newcommand{\high}{\operatorname{H}}
\newcommand{\nonlow}{\operatorname{NL}}
\newcommand{\smm}{\mathfrak s_{\mathrm{mm}}}
\begin{document}

\title{Cardinal Characteristics and Computability}
\date{}
\author{Logan McDonald}

\maketitle
\thispagestyle{firstpage}

\begin{abstract}
    Cardinal characteristics of the continuum represent the boundaries in size between the countable and the continuum with respect to certain properties of sets.
    They are often defined as the minimum sizes of families of reals that meet some criteria.
    Taking these families and considering their analogues in the setting of computability theory provides a rich hierarchy of properties of oracles, which can be studied in terms of the Muchnik/Medvedev lattices of mass problems.
    We provide more detail to the proof of the Medvedev equivalence between dominating functions and maximal independent families given by Lempp et al.\ (2023) and adapt their construction of maximal almost disjoint families to the setting of $\omega$-computably approximable sets. 
    We then extend the theory to include correspondents of maximal ideal independent families and show they behave similarly to the maximal independent families.
\end{abstract}

\begin{figure}[b]
    \vspace*{25pt}
\end{figure}

\tableofcontents

\section{Introduction}
The continuum is an essential concept in many areas of mathematics.
Among the most famous results in mathematical logic are the two statements that comprise the undecidability of the cardinality of the continuum.
The continuum hypothesis (CH), as posed by Cantor in 1878, states: 
``There is no set whose cardinality is strictly between that of the integers and that of the reals."
Work by G\"odel \cite{God:1939} in 1940 and Cohen~\cite{Coh:1963} in 1963 shows that this is undecidable in the framework of Zermelo-Frankael set theory (ZFC), a sufficient foundation for much of mathematics.
If we assume ZFC is consistent, then there are models of ZFC in which CH holds and models where it does not hold.
\par
Despite CH being independent of ZFC, there is a good amount of thought to be given to the size of the continuum.
ZFC is very far from being an absolute characterisation of mathematics, so the undecidability of the size of the continuum does not ensure that there is no evidence for or against CH.
When working with mathematics, one can accept some rigorous yet incomplete view of the universe of sets, such as ZFC, or hold a Platonistic view and assert that there is some underlying structure to mathematics that we can only approximate with our formal theories.
In the latter case, one may believe that the size of the continuum will have an evidenced and intuitive answer, but we have not found it.
If this were the case, it may seem strange we still use an axiomatic system that has not changed for over a century, as that would suggest no progress has been made in this direction.
\par
However, there are axioms we could add, the axiom of projective determinacy (PD) is one such case.
Projective determinacy is believed to be true by Woodin \cite{Woo:2001}, and like CH it is independent from ZFC.
Regarding axioms such as PD, Woodin \cite[p.\ 569]{Woo:2001} says:
\begin{quote}
    The truth of these axioms became evident only after a great deal of work. 
    For me, a remarkable aspect of this is that it demonstrates that the discovery of mathematical truth is not a purely formal endeavor.
\end{quote}
\par
Several attempts have been made to resolve the continuum problem in the way some think of projective determinacy.
Woodin \cite{Woo:2004} developed his notion of $\Omega$-logic as one approach to finding evidence against CH.
His work in this direction eventually all depended on a very strong conjecture.
Perhaps because of this, in 2010, a shift occured in Woodin's thoughts, with him beginning work in a new direction.
We take a slight detour to discuss this.
\par
G\"odel's proof that CH is consistent used his constructible universe $L$, which satisfies the axioms of ZFC and CH.
The assumption that the set theoretic universe $V$ is equal to $L$ could be taken, and indeed, it resolves many undecidable statements in ZFC.
Unfortunately, ZFC+``$V=L$" is very restrictive.
Not only does it satisfy CH, it also contradicts many large cardinal axioms that speak about the size of the set theoretic universe.
Woodin \cite{Woo:2017} believes that there is no reason to rule out these large cardinal axioms, and so now works on developing an alternative to $V=L$ which accommodates these large cardinal axioms while preserving CH.
He looks for an ultimate version of~$L$, which should be compatible with many large cardinals, suggesting that it is always close to $V$; then arguably the analogue of $V=L$ may be true.
\par
Cardinal characteristics of the continuum are sizes of families of reals that preserve some properties of the reals.
They act as a sort of boundary between the countable sets and the reals with respect to some property.
The range of possibilities for cardinal characteristics depends on CH.
If CH holds, the only uncountable possibility is $\mathfrak c$, the cardinality of the continuum.
If we do not assume CH, perhaps we can find a richer structure below the continuum; these cardinal characteristics provide an interesting direction to study. 
\par
As CH trivialises the theory of cardinal characteristics of the continuum, the rich structure of cardinal characteristics could be taken as evidence against CH.
There is no apparent reason that all of these values should coincide, as CH suggests.
\par
The main topic of interest in this dissertation is the analogues of these cardinal characteristics in the setting of computability theory, mass problems in particular.
We can study these in terms of lattices of degrees under the Muchnik and Medvedev reductions.
Much the same as Turing reduction, which gives a degree structure and notion of computational complexity on sets of natural numbers, the Muchnik and Medvedev reductions give a similar structure on collections of functions or sets on the natural numbers.
\par
The lattices of Muchnik and Medvedev degrees have notably been studied at length by Simpson \cite{CS:2007,Sim:2008,Sim:2005}.
It is a popular opinion that the structure of $\Pi^0_1$-classes with the Muchnik and Medvedev reducibilities is natural to study, relative to the c.e. degrees of sets, as noted by Binns \cite{Bin:2002}.
We can see an example of this with how easily computability theoretic correspondents arise from the cardinal characteristics studied in set theory.

\subsection{Notation}
We elaborate on some of the set theoretic notation that will be used throughout this dissertation.
$\wtow$ is the set of functions from naturals to naturals (to avoid being confused with the ordinal $\omega^\omega$).
Following suit, ${}^\omega X$ is used for countably infinite sequences of elements from $X$.
The set of infinite subsets of the naturals is written $\infsets$.
The quantifiers $\forall^\infty$ and $\exists^\infty$ are used to mean `for all but finitely many' and `there exist infinitely many' respectively.
Note that `for all but finitely many' is equivalent to excluding some finite initial segment.
These follow the same duality relation as the usual quantifiers $\forall$ and $\exists$ via negation.
An asterisk is often used to represent mod finite relations.
For example, $A\subseteq^*B$ means that $A$ is a subset of $B$ if you ignore finitely many exceptions.
The word `almost' is also often used to mean with finitely many exceptions.
Given a set $A$, its characteristic function $\chi_A$ is defined to be $1$ on inputs within $A$ and $0$ everywhere else.

\subsection{Cardinal characteristics of the continuum} \label{ccs}
Among the simplest cardinal characteristics are the bounding number $\mathfrak b$ and the dominating number $\mathfrak d$.
We look at the mod finite relation between functions on the natural numbers.
For $f,g\in\wtow$ we have $f\le^*g$ iff $\forall^\infty x:f(x)\le g(x)$, there is some fixed $x_0$ such that $f$ is at least as much as $g$ past that point.
One can define the reverse and non-strict relations in much the same way.
\par
The \emph{bounding number} $\mathfrak b$ is the least size of an unbounded family in $\wtow$ ordered by~$\le^*$.
Specifically a family $\mathcal B\subseteq\wtow$ is \emph{unbounded} if there is no choice of a single $f\in\wtow$ such that $g\le^*f$ for all $g\in\mathcal B$.
The \emph{dominating number} $\mathfrak d$ is similarly the least size of a dominating family.
A family $\mathcal D\subseteq\wtow$ is \emph{dominating} if for each $f\in\wtow$ there is a $g\in\mathcal D$ such that $f\le^*g$.
Clearly $\mathfrak b\le\mathfrak d$ (a family cannot be dominating if it is not unbounded).
An unbounded family must be uncountable, this can be observed via a diagonalisation argument.
Suppose we have a countable family $\{g_n:n\in\omega\}$, then define $f(n):=\max_{i\le n}g_i(n)$ which ensures the family is not unbounded.
\par
The bounding number was first considered by Rothberger \cite{Rot:1938}, who studied its relation to the continuum hypothesis.
The dominating number was introduced by Kat\v etov \cite{Kat:1960}.
Some more classical results in cardinal characteristics are described and referenced in van Douwen \cite{Dou:1984}.
These cardinals have had various notations. 
The first notation for $\mathfrak b$ was $\aleph_\eta$, and the symbols $K_8$, $\lambda_3$, and $\xi$ have also been used according to van Douwen \cite{Dou:1984}.
\par
Several further characteristics are defined in terms of families of sets of naturals, rather than functions on the natural numbers.
A family ${\mathcal A\subseteq\infsets}$ is called \emph{almost disjoint} if the intersection of any pair of elements is finite.
An almost disjoint family is maximal if no more sets can be added while preserving almost disjointness.
In general a set is maximal with respect to a property if it can not be extended while preserving the property.
The \emph{almost disjointness number} $\mathfrak a$ is the least size of a maximal almost disjoint (MAD) family.
\par
A family $\mathcal T\subseteq\infsets$ is called a \emph{tower} if it is linearly ordered by $\subseteq^*$.
The \emph{tower number} $\mathfrak t$ is the least size of a maximal tower.
In a weakening of this, a family has the \emph{strong finite intersection property} if any finite subset has infinite intersection.
Such a family is said to have a \emph{pseudointersection} if there is an infinite set which is an almost subset of every element.
The \emph{pseudointersection number} $\mathfrak p$ is the least size of a family which has the strong finite intersection property but no pseudointersection.
This is a weaker condition than being a maximal tower so $\mathfrak p\le\mathfrak t$.
It was shown by Malliaris and Shelah~\cite{MS:2016} that $\mathfrak p=\mathfrak t$ is in fact provable in ZFC, a rare case of a new ZFC relation between characteristics.
\par
Recall that a filter base is a family of sets whose upwards closure forms a filter.
The \emph{ultrafilter number} $\mathfrak u$ is the least size of a filter base for a non-principal ultrafilter on~$\omega$.
A family $\mathcal I\subseteq\infsets$ is called \emph{independent} if any finite intersection of its elements or their complements is infinite.
The \emph{independence number} $\mathfrak i$ is the least size of a maximal independent family.
\par
Blass \cite{Bla:2010} gives a survey of the theory of cardinal characteristics.
It provides in-depth information about these and many other characteristics.
In Figure \ref{hasseccc}, adapted from Blass' survey and adding the result of Malliaris and Shelah, we can see which relations between these and other combinatorial cardinal characteristics are known to be provable in ZFC.
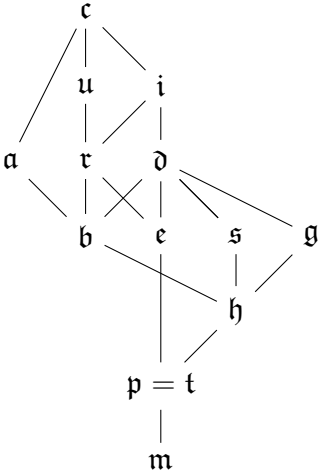
\begin{figure}[H]
\begin{tikzpicture}
    \node (u) {$\mathfrak u$};
    \node (c) [above of = u] {$\mathfrak c$};
    \node (i) [right of = u] {$\mathfrak i$};
    \node (r) [below of = u] {$\mathfrak r$};
    \node (d) [below of = i] {$\mathfrak d$};
    \node (a) [left of = r] {$\mathfrak a$};
    \node (b) [below of = r] {$\mathfrak b$};
    \node (e) [below of = d] {$\mathfrak e$};
    \node (s) [right of = e] {$\mathfrak s$};
    \node (g) [right of = s] {$\mathfrak g$};
    \node (h) [below of = s] {$\mathfrak h$};
    \node (blank) [below of = e] {};
    \node (t) [below of = blank] {$\mathfrak {p=t}$};
    \node (m) [below of = t] {$\mathfrak m$};

    \draw (u) -- (r);\draw (c) -- (u);\draw (c) -- (a);
    \draw (c) -- (i);\draw (i) -- (r);\draw (i) -- (d);
    \draw (a) -- (b);\draw (r) -- (b);\draw (d) -- (b);
    \draw (r) -- (e);\draw (d) -- (e);\draw (d) -- (g);
    \draw (d) -- (s);\draw (d) -- (s);\draw (b) -- (h);
    \draw (g) -- (h);\draw (s) -- (h);\draw (h) -- (t);
    \draw (e) -- (t);\draw (t) -- (m);
\end{tikzpicture}
\centering
\caption{Hasse diagram of many combinatorial cardinal characteristics}
\label{hasseccc}
\end{figure}
The result that $\mathfrak p=\mathfrak t$ is a truly rare case of an equality between characteristics being provable in ZFC.
There is typically very little known evidence that any two cardinal characteristics should (not necessarily in a formal sense) be equal, unless one believes that CH is true.
The rich structure of cardinal characteristics and the difficulty of obtaining equality between them could suggest that CH is not true in an intuitive sense.
Perhaps the structure below the continuum has a natural significance.


\subsection{Preliminaries on families in Boolean algebras} \label{bprelims}
The analogues of cardinal characteristics that we focus on are \emph{mass problems}, sets of functions on $\omega$.
Note that families of sets are a special case of this.
We can compare relative computational complexity of these problems via the \emph{Medvedev (strong) reduction}: $\mathcal A\le_s\mathcal B$ for mass problems $\mathcal A,\mathcal B$ if there is a fixed Turing functional $\Theta$ such that $\Theta^B\in\mathcal A$ for any $B\in\mathcal B$.
That is to say that there is a uniform reduction that takes elements of $\mathcal B$ to elements of $\mathcal A$.
Alternatively we can use the \emph{Muchnik (weak) reduction} where a fixed Turing function is not necessary, simply $\mathcal A\le_w\mathcal B$ if every element of~$\mathcal B$ computes an element of $\mathcal A$.
Several cardinal characteristics can be associated with mass problems, and we can observe that there is some correlation between the ordering of cardinal characteristics and the position of the mass problem in the Medvedev or Muchnik hierarchies.
\par
In order to get mass problems from the cardinal characteristics defined in terms of families of sets we use some definitions.
We fix a Boolean algebra $\mathbb B$ of subsets of $\omega$, and look at subsets of $\mathbb B$ which will be our mass problems.
When the Boolean algebra~$\mathbb B$ is not specified it is assumed to be the computable sets.
It is important to be able to view sequences of sets as subsets of $\omega$ themselves.
Given a suitable pairing function $\langle\cdot,\cdot\rangle:\omega\times\omega\to\omega$ (it should be bijective and always have $\langle x,e\rangle\ge x,e$) we can view a sequence $\langle F_e\rangle_{e\in\omega}$ as the set $F$ precisely containing elements $\langle x,e\rangle$ where $x\in F_e$.
Something similar can be done for functions, a sequence $\langle f_e\rangle_{e\in\omega}$ is identified with a function $f$ where $f(e,x)=f_e(x)$.
\par
Now we look at analogies of several of the combinatorial cardinal characteristics defined earlier.
\begin{definition} \label{defmad}
A sequence $\langle F_e\rangle_{e\in\omega}$ is called a maximal almost disjoint (MAD) family in $\mathbb B$ if for any $e,k$ we have $F_e\cap F_k$ finite and for any infinite $R\in\mathbb B$ there is some $e$ with $F_e\cap R$ infinite.
Let $\mathcal A_{\mathbb B}$ be the mass problem of MAD families in $\mathbb B$.
\end{definition}
\begin{definition} \label{deftower}
A $\mathbb B$-tower is defined to be a sequence $\langle F_e\rangle_{e\in\omega}$ such that $F_0=\omega$, $F_{e+1}\subseteq^*F_e$, and $F_e\smallsetminus F_{e+1}$ infinite for each $e$. 
It is maximal if for every infinite $R\in\mathbb B$, there is $e$ such that $R\smallsetminus F_e$ is infinite.
$\mathcal T_{\mathbb B}$ is the mass problem of maximal towers in $\mathbb B$.
\end{definition}
Lempp et al.\ show that $\mathcal A_{\mathbb B}\le_s\mathcal T_{\mathbb B}\le_s\mathcal A_{\mathbb B}$ for any Boolean algebra $\mathbb B$ of subsets of $\omega$.
This equivalence does not carry over to the set theoretic setting, where we have only that $\mathfrak a\ge\mathfrak t$.
\par
\begin{definition} \label{defufb}
A $\mathbb B$-ultrafilter base is defined to be a $\mathbb B$-tower $\langle F_e\rangle_{e\in\omega}$ such that for every $R\in\mathbb B$ there is an $e$ such that either $F_e\subseteq^*\overline R$ or $F_e\subseteq^* R$.
Let $\mathcal U_{\mathbb B}$ be the mass problem of $\mathbb B$-ultrafilter bases.
\end{definition}
\begin{definition} \label{defidp}
Given a sequence $\langle F_e\rangle_{e\in\omega}$, for each binary string $\sigma$ define 
\begin{equation}
F_\sigma=\bigcap_{\sigma(i)=0}\overline F_i\cap\bigcap_{\sigma(i)=1}F_i.
\label{fsigma}
\end{equation}
The sequence is called $\mathbb B$-independent if for every binary string $\sigma$, the set $F_\sigma$ is infinite.
The sequence is maximal independent in $\mathbb B$ if for every $R\in\mathbb B$ there is $\sigma$ such that $F_\sigma\subseteq^*R$ or $F_\sigma\subseteq^*\overline R$.
Let $\mathcal I_{\mathbb B}$ be the mass problem of maximal independent families in~$\mathbb B$.
\end{definition}
Lempp et al.\ show that when $\mathbb B$ is the Boolean algebra of computable sets modulo finite sets then $\mathcal U_{\mathbb B}=_s\mathcal I_{\mathbb B}$.
This result is obtained by showing that both are Medvedev equivalent to the mass problem of dominating functions for computable sets.
\par
A function $g:\omega\to\omega$ is called dominating if $g\ge^*f$ for every computable function~$f$ and $g(n)\ge n$ for all $n$.
The mass problem of dominating functions is called $\mathrm{DomFcn}$.

\subsection{Recursive analogues of characteristics}
\
Rupprecht in his PhD thesis \cite{Rup:2010-1} gave an exposition on what he called `Turing characteristics'.
He defines a scheme for generalising the cardinal characteristics' definitions, associating them to objects that he calls `debates'.
We will look more closely at these objects later in this introduction.
In particular, Rupprecht looks at the cardinals in Cicho\'n's diagram, which shows relationships between $\mathfrak b$, $\mathfrak d$, and characteristics related to measure and Baire category.
Using debates, he studies the analogues of these cardinals in the computability theory setting.
Most of his work is only in his thesis, and otherwise unpublished.
\par
Further work on this was done by Brendle et al.\ \cite{BBNN:2015}.
This paper made the theory more accessible and expanded on several interesting directions.
They survey the theory regarding Cicho\'n's diagram and relate the theory to other notions in both computability and set theory.
They also outline further directions for research: open problems and set theoretic relations that were not known to hold or not in computability.
\par
The theory of cardinal characteristics in computability is not isolated; it has applications to other areas of computability theory.
The Gamma question comes from the field of coarse computability, and was resolved by Monin \cite{Mon:2018} using the theory of mass problems.
It is an interesting example to show how uses of the theory of mass problems arise naturally, as will be seen in section \ref{gammaprob}.
\subsubsection{Highness classes and Weihrauch problems}
Greenberg, Kuyper, and Turetsky \cite{GKT:2019} studied how Weihrauch problems are used to find analogous results in set theory and computability.
These Weihrauch problems are the debates that Rupprecht had studied, given the name by which they are known in reverse mathematics.
A \emph{Weihrauch problem} is a triple $A=(A_{\mathrm{inst}},A_{\mathrm{sol}},A)$ with ${A_{\mathrm{inst}},A_{\mathrm{sol}}\subseteq\wtow}$ and $A$ is a binary relation between $A_{\mathrm{inst}}$ and $A_{\mathrm{sol}}$.
Elements of $A_{\mathrm{inst}}$ and $A_{\mathrm{sol}}$ are referred to as \emph{instances} and \emph{solutions} respectively.
$b\in A_{\mathrm{sol}}$ is a \emph{solution for $a\in A_{\mathrm{inst}}$} if $a A b$.
For example, the domination problem $\mathrm{Dom}$ is related to the characteristic $\mathfrak d$.
An instance is a function $f\in\wtow$ and a solution for it is a $g\in\wtow$ such that $g\ge^*f$.
A \emph{complete solution set} for $A$ is a set of solutions containing a solution for every instance of $A$.
$\card(A)=\min\{|C|:C\text{ is a complete solution set for }A\}$.
$\high(A)$ is the collection of oracles $X\in{}^\omega2$ that compute a solution which solves every computable instance of $A$.
$\card(A)$ gives us a cardinal characteristics associated with $A$, and $\high(A)$ a highness class associated with $A$.
As one would expect, $\card(\mathrm{Dom})=\mathfrak d$.
For $\mathrm{Dom}$, we get $\high(\mathrm{Dom})$ to be the class of high degrees, which compute functions that dominate all computable functions.
\par
This theory of Weihrauch problems gives us a scheme that highlights the analogy between these cardinal characteristics and interesting notions in computability.
Rupprecht and Brendle et al.\ use different terminology and notation for these concepts, though the ideas remain the same.
They are described here in this form, as by Greenberg, Kuyper, and Turetsky \cite{GKT:2019}, as it is the most recent and tidy description and best integrates with similar structures in other areas of computability and logic.
\par
Greenberg, Kuyper, and Turetsky also observe that the relations between cardinal characteristics provable in ZFC or reductions between highness properties can be described in this setting.
An \emph{effective morphism} between two Weihrauch problems $A,B$ is a reduction from $A$ to $B$ in this setting.
Given an instance $a$ for $A$, it is effectively translated to an instance for $B$, and $B$-solutions for this instance are translated back to $A$-solutions for $a$.
The effective morphism is thus a pair of maps $\varphi_{\mathrm{inst}},\varphi_{\mathrm{sol}}$ such that $\varphi_{\mathrm{inst}}(A_{\mathrm{inst}})\subseteq B_{\mathrm{inst}}$, $\varphi_{\mathrm{sol}}(B_{\mathrm{sol}})\subseteq A_{\mathrm{sol}}$, and $\varphi_{\mathrm{inst}}(a)Bb$ implies $aA\varphi_{\mathrm{sol}}(b)$.
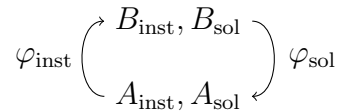
\begin{figure}[h]
\begin{tikzpicture}
    \node(b) {$B_{\mathrm{inst}},B_{\mathrm{sol}}$};
    \node(a) [below of = b] {$A_{\mathrm{inst}},A_{\mathrm{sol}}$};

    \draw (a.west) edge[->] [out=180,in=180] (b.west) node[xshift=-0.8cm,yshift=0.5cm] {$\varphi_{\mathrm{inst}}$};
    \draw (b.east) edge[->] [out=0,in=0] (a.east) node[xshift=0.8cm,yshift=-0.5cm] {$\varphi_{\mathrm{sol}}$};
\end{tikzpicture}
\centering
\label{effmorph}
\caption{An effective morphism}
\end{figure}
The existence of an effective morphism from $A$ to $B$ is written $A\to B$.
Greenberg, Kuyper, and Turetsky show that $A\to B$ implies that $\card(A)\le\card(B)$ and $\high(A)\supseteq\high(B)$, justifying this scheme as a sufficient framework to study these analogies.
\par
It is interesting to note that in the cases of dual characteristics such as $\mathfrak b$ and $\mathfrak d$, there are two ways that analogues can be described via the above methods.
Whereas Greenberg, Kuyper, and Turetsky \cite{GKT:2019} take high degrees as equivalent to $\mathfrak d$ and hyperimmune degrees for $\mathfrak b$.
Brendle et al.\ \cite{BBNN:2015} do the reverse, $\mathfrak b$ corresponding to high degrees and $\mathfrak d$ to hyperimmune degrees.
Both papers discuss this in some form.
Greenberg, Kuyper, and Turetsky define both the highness class $\high(A)$ and the non-lowness class~$\nonlow(A)$ and give similar results as above for the non-lowness class $\nonlow(A)$ but with the direction flipped.
They note that the non-lowness classes behave better in some ways and use them for the majority of the paper.
Brendle et al.\ justify their choice via a comparison between building an oracle and building a generic object for forcing.
The preferred method in this dissertation is to associate $\mathfrak d$ with dominating functions and high degrees and $\mathfrak b$ with hyperimmune degrees.
This best aligns with the analogues of many of the characteristics in Figure \ref{hasseccc}.
\subsubsection{Mass problems in the Medvedev lattice}
While the highness classes in Turing degrees provide one system of analogues to cardinal characteristics, the theory of mass problems provides an alternative.
Whereas $\high(A)$ takes the class of oracles that compute a universal solution for computable instances of $A$, we could instead take the universal solutions themselves:
\begin{definition} \label{bdcomp}
    For a relation $R$ on $X,Y$ define
    $$\mathcal B(R)=\{y\in Y:\forall x\text{ computable }(xRy)\}$$ 
    $$\mathcal D(R)=\{x\in X:\forall y\text{ computable }(\lnot xRy)\}$$
\end{definition}
Here, we follow the convention of Brendle et al.\ as it will be convenient to refer to them later.
In the case of the eventual domination relation, the former gives us the mass problem $\mathcal B(\le^*)=\mathrm{DomFcn}$ of dominating functions, which is used at length in the later chapters.
We could also take the mass problem of the high sets themselves, which is equivalent to $\mathrm{DomFcn}$ in the Muchnik/Medvedev degrees, but includes elements higher in the Turing degrees that have nothing to do with $\mathrm{Dom}$ other than computing a dominating function.
This further suggests the Muchnik degrees could be a more natural class of objects to study than the Turing degrees for these objects.
\par
Instead of a mass problem equivalent of highness classes, it can be interesting to define an analogue of the set theoretic complete solution set defined above.
Using a sequence identified with a single object (as described above), the analogue of a complete solution set for $A$ will be a sequence $\langle a_e\rangle_{e\in\omega}$ of computable elements that contains a solution for any computable instance of $A$.
The set of all such sequences is another mass problem associated with $A$.
If we take the example of $\mathrm{Dom}$ as above, we get the mass problem of sequences $\langle g_e\rangle_{e\in\omega}$ such that for any computable function $f$ there is some $e$ such that $f\le ^*g_e$.
This mass problem is equivalent in the Muchnik/Medvedev degrees to the mass problems of high sets or dominating functions.
\par
This gives an idea of how to find analogues of cardinal characteristics that do not fit this framework.
Many of the characteristics mentioned in Section \ref{ccs} can be converted into computability theoretic objects by taking the families they bound the size of and converting them directly into families in computability.
This gives us the notions described in Section \ref{bprelims}, and the associated mass problems give us a natural way of studying their complexity.
These are studied at length by Lempp et al.\ \cite{LMNS:2023}.
Notably, they show that the mass problems $\mathcal A$ and $\mathcal T$ of MAD families and maximal towers are Medvedev equivalent, and similarly that $\mathcal U$ for ultrafilter bases and $\mathcal I$ for independent sets are Medvedev equivalent.
\subsubsection{Other work on cardinal characteristics in computability}
There are many other cardinal characteristics that can be looked at in the setting of computability.
The final comments of Brendle et al.\ \cite{BBNN:2015} address some results regarding analogues of the splitting number $\mathfrak s$ and the reaping number $\mathfrak r$.
Valverde and Tveite \cite{VT:2021} investigate correspondents of the evasion number $\mathfrak e$ and the predicting number $\mathfrak v$.
We can see in Figure \ref{otherchars} how the set theoretic and computability theoretic relations compare.
Figure \ref{otherchars} follows the conventions of Brendle et al.\ to associate $\mathfrak b$ to high degrees as $\mathcal B(\le^*)$ and $\mathfrak d$ to hyperimmune degrees as $\mathcal D(\le^*)$ in Definition \ref{bdcomp}.
The depicted set theoretic results are given by Blass \cite{Bla:2010}.
\begin{figure}[h]
\begin{minipage}{.3\textwidth}
\begin{tikzpicture}
    \node(d) {$\mathfrak d$};
    \node(s) [below = of d] {$\mathfrak s$};
    \node(r) [right = of d] {$\mathfrak r$};
    \node(e) [left = of d] {$\mathfrak e$};
    \node(v) [above = of d] {$\mathfrak v$};
    \node(b) [below = of e] {$\mathfrak b$};

    \draw[->] (b) -- (d);\draw[->] (e) -- (d);\draw[->] (s) -- (d);
    \draw[->] (e) -- (v);\draw[->] (s) -- (r);\draw[->] (b) -- (r);
    \draw[->] (b) -- (v);\draw[->] (e) to[out=335, in=205] (r);
\end{tikzpicture}
\end{minipage}
\begin{minipage}{.6\textwidth}
\scalebox{0.8}{
\begin{tikzpicture}
    \node(dc) [text width=2cm, align=center] {Hyper-immune degree};
    \node(sc) [text width=2cm, align=center, below = of dc] {r-cohesive};
    \node(rc) [text width=2cm, align=center, right = 2cm of dc] {Bi-immune degree};
    \node(ec) [text width=2cm, align=center, left = 2cm of dc] {Prediction degree};
    \node(vc) [text width=2cm, align=center, above = of dc] {Evasion degree};
    \node(bc) [text width=2cm, align=center, below = of ec] {High degree};

    \draw[->] (bc) -- (ec) node[xshift=-1.2cm, yshift=-1.1cm] {\cite[Th.\ 3.1]{VT:2021}};
    \draw[->] (bc) -- (sc) node[xshift=-2.15cm, yshift=0.4cm] {\cite[Cor.\ 2]{Joc:1973}};
    \draw[->] (ec) -- (dc) node[xshift=-2.15cm, yshift=0.4cm] {\cite[Th.\ 3.4]{VT:2021}};
    \draw[->] (sc) -- (dc) node[xshift=1.3cm, yshift=-1.4cm] {\cite[Lem.\ 2.7]{GKT:2019}};
    \draw[->] (dc) -- (vc) node[xshift=-1.2cm, yshift=-1.1cm] {\cite[Th.\ 4.1]{VT:2021}};
    \draw[->] (dc) -- (rc) node[xshift=-2.15cm, yshift=0.4cm] {\cite[Th.\ 3]{Joc:1969}};
\end{tikzpicture}}
\end{minipage}
\centering
\caption{Relations between some cardinal characteristics and their analogues}
\label{otherchars}
\end{figure}
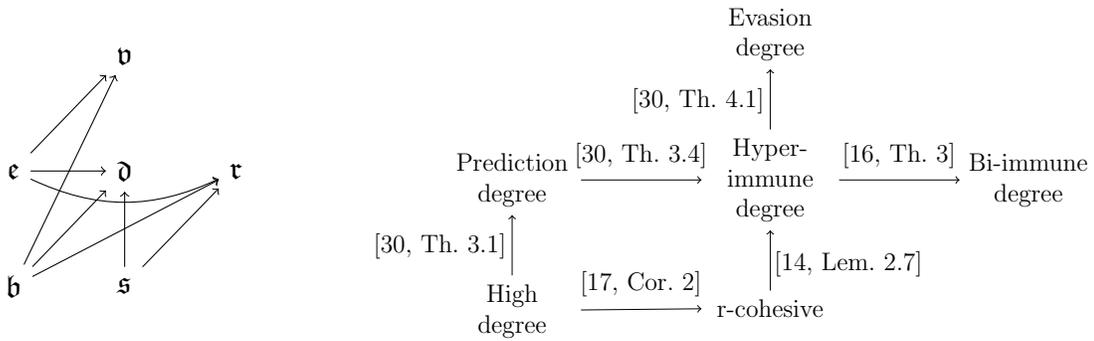
We can see here some instances of being able to obtain more in the computability setting, as shown in Figure \ref{otherchars}.
Several of the results cited in the diagram are due to equivalences in the computability setting.
For example, the hyperimmune degrees and weakly 1-generic degrees coincide, so to prove a property implies hyperimmune degree it is sometimes more instructive to show that it has weakly 1-generic degree.
This is how prediction degree implies hyperimmune degree was shown by Valverde and Tveite \cite{VT:2021}.
\par
It is an important aspect of the cardinal characteristics program to see where it is possible to separate the cardinal characteristics in ZFC. 
Naturally, we want to see how this carries over into the computability theory case, but this is where more differences between set theory and computability arise.
It is the case that several analogues of cardinal characteristics turn out to be equivalent, where strict relations are consistent in set theory.
Several of the analogues of Cicho\'n's diagram collapse in the computability theory setting, as noted by Brendle et al.\ \cite{BBNN:2015}.
Perhaps this indicates that other analogues may be more fruitful; Kihara \cite{Kih:2017} has shown that in the setting of hyperarithmetical theory less collapsing occurs in analogues of Cicho\'n's diagram.
Significant work has been done by Switzer \cite{Swi:2020-1,Swi:2020-2} describing analogues of Cicho\'n's diagram for generalised reduction concepts, in particular degrees of constructability relative to a fixed inner model of ZFC.
He also considers relationships between cardinal characteristics of the space of functions $\wtow\to\wtow$.

\subsubsection{The Gamma question and mass problems} \label{gammaprob}
The Gamma question asks about the Gamma value of Turing degrees, which in some sense measures how close a degree is to being computable.
The \emph{lower density} of $Z\subseteq\omega$ is defined to be the number $$\underline\rho(Z)=\liminf_n\frac{|Z\cap[0,n]|}n$$ between $0$ and $1$.
Given sequences $A$ and $R$, let $A\leftrightarrow R$ be the set of indices $n$ where the $n$-th element of $A$ is equal to the $n$-th element of $R$.
The \emph{gamma value} of a sequence $A$ is defined to be: $$\gamma(A)=\sup_{R\text{ computable}}\underline\rho(A\leftrightarrow R)$$
Generalising this to Turing degrees, the \emph{Gamma value} (with a capital `G') of a degree with representative $X$ is: $$\Gamma(X)=\inf\{\gamma(A):A\equiv_TX\}$$
The Gamma question asks if the Gamma value can be strictly between $0$ and $1/2$, if not then the only possible values are in $\{0,1/2,1\}$.
Building on work by Rupprecht \cite{Rup:2010-2} on Schnorr engulfing sequences and hyperimmune degrees, Monin and Nies \cite{MN:2015} made progress on the Gamma question.
They give conditions on the Gamma value of oracles that lie in the families studied by Rupprecht. 
Work by Monin \cite{Mon:2018} builds on this and gives a negative answer to the Gamma question.
There was more progress by Monin and Nies \cite{MN:2021} which among other things gave a refined proof of this result. 
\par
The proof makes use of mass problems that had already been studied, and introduces mass problems directly related to the Gamma question.
A function $f$ is called IOE (``infinitely often equal") if for any computable function $r$ we have $\exists^\infty x(f(x)=r(x))$, that is $f$ agrees with each computable function infinitely often.
Dually, a function~$f$ is called AED (``almost everywhere different") if $\forall^\infty x(f(x)\neq r(x))$.
Given an order function $h$ (non-decreasing, unbounded, computable) we define classes $\mathrm{IOE}(h)$ and $\mathrm{AED}(h)$ as above, but with the constraint that $r<h$.
To fit with Definition \ref{bdcomp}, one can define a relation $f\neq_h^*g\iff f\neq^*g\,\,\land\,\,g<h$ that gives $\mathcal D(\neq_h^*)=\mathrm{IOE}(h)$ and $\mathcal B(\neq_h^*)=\mathrm{AED}(h)$.
\par
In order to tie these previously studied objects to the Gamma question, note how the gamma value as a supremum of the lower density over all computable sequences is reminiscent of the universal solutions we use to obtain mass problems.
To bring these together concretely, the relation $x\bowtie_py\iff\underline\rho(x\leftrightarrow y)>p$ is defined for $p\in[0,1)$.
The mass problems $\mathcal B(\bowtie_p)$ and $\mathcal D(\bowtie_p)$ are abbreviated as $\mathcal B(p)$ and $\mathcal D(p)$ respectively.
The main result they seek to prove is that
$$\mathcal D(p)\equiv_w\mathrm{IOE}(2^{2^n})\text{ and }\mathcal B(p)\equiv_s\mathrm{AED}(2^{2^n})$$
for arbitrary $p\in(0,1/2)$.
Once they have this, they have that the degrees of sequences with gamma value below $1/2$ also have representatives with gamma value below any $p>0$, so the infimum and hence Gamma value must be $0$.

\section{Combinatorial Mass Problems}
The cardinal characteristics with definitions based on families of sets with self-contained properties did not fit into the frameworks based on relations that were described earlier.
The work on finding correspondents to these done by Lempp et al.\ \cite{LMNS:2023} shows that many of the relations between these cardinal characteristics collapse in the setting of mass problems.
\par
Lempp et al.\ show that in any Boolean algebra of sets, the mass problems of MAD families and of maximal towers are Medvedev equivalent.
In set theory, it is consistent for $\mathfrak a>\mathfrak t$ to hold, so this is one instance of collapsing in the computability setting.
They also show that any non-computable c.e.\ set computes a MAD family and that this MAD family is even c.e.
The Medvedev equivalence gives a co-c.e.\ maximal tower from this.
\par
Specifically for the computable sets, by showing both are equivalent to the mass problem of dominating functions, Lempp et al.\ show that $\mathcal U$ and $\mathcal I$ are Medvedev equivalent.
No pair of direct reductions between the two is known.
The equivalence to dominating functions shows that ultrafilter bases (and maximal independent families) capture highness, that computing an ultrafilter base is just as good as computing a dominating function.
They also construct a co-c.e.\ ultrafilter base for the computable sets to give an example that is quite simple in a descriptive sense.
\par
The reductions that give the equivalence $\mathcal I\equiv_s\mathrm{DomFcn}$ are described below with more detail to add clarity and to compare with the results of Section \ref{iif}.
In addition, we adapt their construction of a co-c.e.\ MAD family to the setting of $\omega$-computably approximable sets.

\subsection{Independent families and dominating functions}
Define $\mathrm{Tot}=\{e:\varphi_e\text{ is total}\}$.
A binary function $f$ is an approximation to $\mathrm{Tot}$ if $\chi_{\mathrm{Tot}}=\lambda e.\lim_sf(e,s)$.
The following result is adapted from Soare \cite{Soa:1987} to the setting of Medvedev degrees.
\begin{lemma}
    \label{dftot}
    $\mathrm{DomFcn}$ is Medvedev equivalent to the mass problem of approximations to $\mathrm{Tot}$.
\end{lemma}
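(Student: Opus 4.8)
The plan is to produce two Turing functionals, one sending each dominating function to an approximation to $\mathrm{Tot}$ and one sending each approximation to $\mathrm{Tot}$ to a dominating function. Because Medvedev reduction demands a single fixed functional that works on every member of the source mass problem, the emphasis throughout is on \emph{uniformity}: the underlying idea is the classical coincidence of highness, domination, and $\Delta^0_2$-approximability of $\mathrm{Tot}$, but I must extract honest functionals rather than mere oracle-by-oracle Turing computations.

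For the direction starting from $\mathrm{DomFcn}$, let $g$ be a dominating function and define $f(e,s)=1$ if $\varphi_e(n)$ halts within $g(s)$ steps for every $n\le s$, and $f(e,s)=0$ otherwise; this is computed by a single functional with oracle $g$. I would then verify $\lim_s f(e,s)=\chi_{\mathrm{Tot}}(e)$. If $\varphi_e$ is total then $s\mapsto\max_{n\le s}(\text{running time of }\varphi_e(n))$ is a total computable function, so $g$ eventually dominates it and $f(e,s)=1$ for all large $s$. If $\varphi_e$ is not total, fix $n_0$ with $\varphi_e(n_0)\uparrow$; then for every $s\ge n_0$ the defining condition fails at $n_0$, so $f(e,s)=0$ for all large $s$.

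For the converse, let $f$ be an approximation to $\mathrm{Tot}$. I would define a modulus $\sigma(n)$ to be the least stage $s\ge n$ such that for every $e\le n$ either $f(e,s)=0$ or $\varphi_e(n)$ has halted within $s$ steps, and then set $g(n)$ to be the maximum of $n$ together with all halting values $\varphi_e(k)$ for $e,k\le n$ that are obtained within $\sigma(n)$ steps. Both are computed by one functional with oracle $f$. The search for $\sigma(n)$ terminates because, for $s$ large, every non-total $e\le n$ has $f(e,s)=0$ while every total $e\le n$ has $\varphi_e(n)$ already halted, so the condition is eventually met.

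The hard part will be showing that $g$ actually dominates, and this is where I expect the main obstacle: a total $e_0$ whose approximation converges only slowly could let the search for $\sigma(n)$ stop at a stage where $f(e_0,s)$ is still $0$, so that $\varphi_{e_0}(n)$ is never forced into the maximum. The point that resolves this is the requirement $s\ge n$ in the search, which gives $\sigma(n)\ge n$. Fixing a total $e_0$ and letting $s_{e_0}$ be a stage past which $f(e_0,\cdot)=1$, for every $n\ge\max(e_0,s_{e_0})$ we get $f(e_0,\sigma(n))=1$, so the defining condition forces $\varphi_{e_0}(n)$ to halt within $\sigma(n)$ steps and hence $g(n)\ge\varphi_{e_0}(n)$. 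Thus $g\ge^*\varphi_{e_0}$ for every total $e_0$, and $g(n)\ge n$ by construction, so $g\in\mathrm{DomFcn}$. Since each direction is witnessed by a single functional, this establishes the Medvedev equivalence.
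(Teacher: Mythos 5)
Your proof is correct and follows essentially the same route as the paper's: one direction defines $f(e,s)$ by testing whether $\varphi_e$ converges on all inputs $\le s$ within the time bound supplied by the dominating function, and the other searches for a stage at which each $e\le n$ either has its approximation reading $0$ or has $\varphi_e(n)$ converged, then outputs a value majorizing the resulting computations. The only cosmetic differences are that you take the maximum of the actual output values $\varphi_e(k)$ rather than of the stage numbers (thereby not relying on the convention that $\varphi_{e,t}(x)\!\downarrow$ implies $\varphi_e(x)<t$), and you run a single combined search over all $e\le n$ instead of a per-$e$ search; both of your constructions are visibly uniform, as required for a Medvedev reduction.
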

\begin{proof}
    We first show that given an approximation $f$ to $\mathrm{Tot}$, we can define a dominating function $h$.
    For all $e\le s$ let $$t(e)=(\mu t\ge s)[(\forall x\le s)\varphi_{e,t}(x)\downarrow \,\,\,\lor\,\,\, f(e,t)=0],$$
    and let $h(s)=\max\{t(e):e\le s\}$.
    $t(e)$ is always defined, as if $\varphi_e$ is partial then $f(e,t)=0$ for all but finitely many $t$.
    If $\varphi_e$ is total then $f(e,t)=1$ for all but finitely many $t$ and so $h(s)>\varphi_e(s)$ for all but finitely many $s$.
    \par
    Now suppose $h$ is a dominating function.
    Define $f(e,s)$ as: $$f(e,s)=\begin{cases}
        1&\text{if }(\forall z\le s)\varphi_{e,h(s)}(z)\downarrow\\
        0&\text{otherwise}
    \end{cases}$$
    If $\varphi_e$ is total then so is $\psi_e(y)=\mu s[(\forall x\le y)\varphi_{e,s}(x)\!\!\downarrow]$, so $h$ dominates $\psi_e$ and so $f(e,s)=1$ for all but finitely many $s$.
    If $\varphi_e$ is partial then $\psi_e(y)$ must diverge for some~$y$ so $f(e,s)=0$ for all $s\ge y$.
    \par
    These constructions are determined only by the given approximation to $\mathrm{Tot}$ or dominating function, so there are two fixed Turing functionals which witness the Medvedev equivalence.
\end{proof}
The following proofs are adapted from Lempp et al.\ \cite[Th.\ 4.2--4.3]{LMNS:2023}.
The proofs are presented here with more detail as to be more approachable, and to compare to similar results on ideal independent families later.
Lempp et al.\ prove the following result which is used in Theorem \ref{lmns4.2}.
The proof is omitted here as no changes would be made.
\begin{lemma}
    \label{lmns3.7}
    There is a uniformly computable sequence $P_0, P_1, \cdots$ of nonempty $\Pi^0_1$-classes such that for every $e$,
    \begin{itemize}
        \item if $\varphi_e$ is total then $P_e$ contains a single element, and
        \item if $\varphi_e$ is not total then $P_e$ contains only bi-immune elements.
            \qed
    \end{itemize}
\end{lemma}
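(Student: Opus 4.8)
The plan is to realise each $P_e$ as the set of infinite paths through a tree $T_e\subseteq 2^{<\omega}$ built uniformly in $e$ by a computable stage-by-stage enumeration of strings to be discarded; since the $T_e$ are then uniformly co-c.e.\ and finitely (in fact binary) branching, their path sets form a uniformly computable sequence of $\Pi^0_1$-classes, and nonemptiness will follow from K\"onig's lemma once we check that each $T_e$ stays infinite. Bi-immunity of a path $X$ amounts to the requirements $R_i$: if $W_i$ is infinite then $W_i\cap X\neq\varnothing$ and $W_i\cap\overline X\neq\varnothing$, so it suffices, for every infinite $W_i$, to pin down one coordinate of $X$ to the value $1$ at some element of $W_i$ and one coordinate to $0$ at another element of $W_i$. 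The engine driving the dichotomy is the convergence counter $c(e,s)$, defined as the largest $n$ with $\varphi_{e,s}(x)\!\downarrow$ for all $x<n$; then $c(e,s)\to\infty$ when $\varphi_e$ is total and $c(e,s)$ is eventually constant when $\varphi_e$ is not total.

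First I would organise the construction into \emph{modules} $m=0,1,2,\dots$, opened one per stage. An open module reserves a fresh branching coordinate (creating genuine $2$-branching in $T_e$, hence non-computable paths) together with two further fresh coordinates that it fixes to $1$ and to $0$ as witnesses for whatever requirement $R_i$ it is currently assigned; assignments are made greedily, each open module taking the least requirement not already permanently handled, with each newly appeared element of $W_i$ supplying the needed witness coordinates. The key coupling is that modules are \emph{killed} in FIFO order by increments of $c(e,s)$: maintaining a pointer $p$, each time $c(e,s)$ increases I prune module $p$ down to its ``off'' branch (sending its reserved coordinates to a default value $0$), return its requirement to the pool, and increment $p$. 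Thus the number of kills over the whole construction equals $\lim_s c(e,s)$, and it is this link to the counter, rather than to individual convergences, that makes the verification go through.

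If $\varphi_e$ is total then $c(e,s)\to\infty$, so the pointer passes every module and each module is eventually killed; in the limit every coordinate is fixed (killed modules and untouched coordinates sit at $0$), leaving a single computable path, so $P_e$ is a singleton. If $\varphi_e$ is not total then $c(e,s)$ stabilises at some value $T$, only the first $T$ modules are ever killed, and all but finitely many modules survive; since there are only $T$ kills, each requirement is dislodged only finitely often and is therefore eventually handed to a surviving module whose witnesses then permanently force $W_i$ to meet both $X$ and $\overline X$ along \emph{every} path, so every member of $P_e$ is bi-immune, while the infinitely many surviving branchings keep $T_e$ infinite and nonempty. The main obstacle is exactly this coordination: one must drive the killing by the growth of $c(e,s)$ (so that non-totality yields cofinitely many surviving modules and only finitely many injuries) and arrange the witness coordinates below the branch points so that a bounded amount of early pruning never destroys bi-immunity, while the same mechanism still collapses the tree to a unique path under totality.
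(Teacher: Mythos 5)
The paper does not actually prove this lemma: it is quoted from Lempp et al.\ with the proof explicitly omitted, so there is no in-paper argument to compare against. That said, your construction is correct and is essentially the standard way this dichotomy is obtained: a uniformly co-c.e.\ binary tree whose branch points are destroyed one at a time as the monotone counter $c(e,s)$ increases (so totality of $\varphi_e$ collapses $[T_e]$ to a single path, which is then automatically computable), while in the non-total case all but $\lim_s c(e,s)$ many modules survive and permanently fix, for each infinite $W_i$, one coordinate in $W_i$ to $1$ and one to $0$, forcing every remaining path and its complement to meet $W_i$. Two details to tighten in a write-up: (i) in a co-c.e.\ tree you cannot later ``send to $0$'' a witness coordinate already fixed to $1$ when its module is killed --- but you need not, since a coordinate fixed to either single value contributes no branching, so the singleton conclusion is unaffected if killed modules simply leave their witness coordinates alone and prune only their branching coordinate; (ii) a witness must be an element of $W_i$ that is still fresh, i.e.\ larger than every level of the tree already committed, so a module must wait for a sufficiently large element of $W_i$ to appear (guaranteed when $W_i$ is infinite), and the finitely many kills then dislodge each requirement only finitely often. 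With these conventions your verification goes through, including nonemptiness in both cases.
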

It should be noted that if $P_e$ contains only a single element, that element must be computable.
\par
Recall the computability theoretic notion of independent families from Definition~\ref{defidp}.
\begin{theorem}
    \label{lmns4.2}
    There is a fixed Turing functional that from every maximal independent family computes a dominating function. 
    That is, $\mathrm{DomFcn}\le_s\mathcal I$.
\end{theorem}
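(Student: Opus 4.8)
The plan is to reduce, via Lemma \ref{dftot}, to producing an approximation to $\mathrm{Tot}$, and then to show that totality of $\varphi_e$ is a $\Sigma^0_2$ property \emph{relative to the given family}. Since partiality is trivially $\Sigma^0_2$, this makes $\mathrm{Tot}$ a $\Delta^0_2$ set relative to the family, hence limit-computable from it. Concretely, it suffices to describe a single functional that, from any maximal independent family $\langle F_e\rangle_{e\in\omega}$ (coded as a set $F$), computes a binary $f$ with $\chi_{\mathrm{Tot}}=\lambda e.\lim_s f(e,s)$; composing with the functional of Lemma \ref{dftot} then yields a dominating function and so the Medvedev reduction $\mathrm{DomFcn}\le_s\mathcal I$.

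Fix the uniformly computable sequence of $\Pi^0_1$-classes $P_e=[T_e]$ from Lemma \ref{lmns3.7}, with $T_e$ a computable binary tree. Relative to $F$, every $F_\sigma$ from \eqref{fsigma} is computable, being a finite Boolean combination of the computable sets $F_i$, and is infinite by independence. For a string $\sigma$ and a bound $b$, let $T_e^{\sigma,b}$ consist of those $\tau\in T_e$ with $\tau(x)=1$ for every $x<|\tau|$ satisfying $x\in F_\sigma$ and $x>b$, and let $T_e^{\sigma,b,-}$ be defined the same way with $\tau(x)=0$. The central claim is that \emph{$\varphi_e$ is total if and only if there exist $\sigma$ and $b$ for which $T_e^{\sigma,b}$ or $T_e^{\sigma,b,-}$ is infinite.} The forward direction uses maximality: if $\varphi_e$ is total then $P_e$ is the singleton of a computable set $A_e$, so applying the maximality clause of Definition \ref{defidp} to $R=A_e\in\mathbb B$ produces $\sigma$ with $F_\sigma\subseteq^*A_e$ or $F_\sigma\subseteq^*\overline{A_e}$, and the corresponding subtree then contains $A_e$ and is infinite. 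The reverse direction uses bi-immunity: if $\varphi_e$ is partial then by Lemma \ref{lmns3.7} every member of $P_e$ is bi-immune, so were some $T_e^{\sigma,b}$ infinite, K\"onig's lemma would yield a path $A\in P_e$ with $F_\sigma\smallsetminus[0,b]\subseteq A$, exhibiting an infinite computable subset of the bi-immune set $A$, a contradiction (and symmetrically for $T_e^{\sigma,b,-}$).

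Because ``$T_e^{\sigma,b}$ is infinite'' is $\Pi^0_1$ relative to $F$, the displayed characterisation presents $\mathrm{Tot}$ as $\Sigma^0_2(F)$, uniformly in $e$. On the other hand, partiality, namely $\exists x\,\varphi_e(x)\!\uparrow$, is plainly $\Sigma^0_2$, so the complement of $\mathrm{Tot}$ is $\Sigma^0_2(F)$ as well; hence $\mathrm{Tot}$ is $\Delta^0_2(F)$. A standard limit construction racing the two $\Sigma^0_2(F)$ presentations against each other—at each stage believing whichever side currently shows a confirmed, least, yet-unrefuted witness—then yields the required $F$-computable approximation $f$. All steps are uniform in $e$ and in $F$, so this specifies one functional and completes the reduction.

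I expect the main obstacle to be the reverse direction of the central claim. One must verify that the $F_\sigma$ are genuinely infinite and computable, so that bi-immunity of the paths of $P_e$ actually bites, and that the tree pruning and the choice of bound $b$ are set up so that ``$T_e^{\sigma,b}$ infinite'' corresponds exactly, via K\"onig's lemma, to the existence of a path $A\in P_e$ with $F_\sigma\subseteq^*A$. Getting this bookkeeping right—and confirming that the resulting $\Sigma^0_2(F)$/$\Sigma^0_2$ presentations are uniform enough to feed into the limit construction—is the delicate part; the remaining conversions are routine given Lemmas \ref{dftot} and \ref{lmns3.7}.
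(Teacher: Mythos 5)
Your proposal is correct and follows essentially the same route as the paper's proof: reduce to approximating $\mathrm{Tot}$ via Lemma \ref{dftot}, use maximality plus the bi-immunity dichotomy of Lemma \ref{lmns3.7} to show $\varphi_e$ is total iff some $F_\sigma\smallsetminus[0,b]$ is contained in a member of $P_e$ or in a complement of one, and conclude that $\mathrm{Tot}$ is $\Delta^0_2(F)$, hence limit computable from $F$. Your subtrees $T_e^{\sigma,b}$ and $T_e^{\sigma,b,-}$ are just a tree-level presentation of the paper's classes $P_e\cap S_{F_\sigma\smallsetminus[0,n]}$ and $Q_e\cap S_{F_\sigma\smallsetminus[0,n]}$, so the difference is only notational.
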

\begin{proof}
Exactly as in Lemma \ref{lmns3.7}, let $\langle P_e\rangle_{e\in\omega}$ be a uniformly computable sequence of nonempty $\Pi^0_1$-classes.
Let $\langle Q_e\rangle_{e\in\omega}$ be the sequence of $\Pi^0_1$-classes of complements of elements of $P_e$'s, that is: $Q_e = \{X:\overline X\in P_e\}$.
For every set $C$, let ${S_C=\{X\in2^\omega:C\subseteq X\}}$ be the family of subsets of $C$.
\par
Now, let $F=\langle F_e\rangle_{e\in\omega}$ be a maximal independent family.
Recall the definition of $F_\sigma$ from (\ref{fsigma}).
We observe the following claim for each $e$,
\begin{align*}
    \varphi_e\text{ total}
    &\iff\exists\sigma\exists n(F_\sigma\smallsetminus[0,n]\subseteq X\text{ for some }X\in P_e\cup Q_e)\\
    &\iff\exists\sigma\exists n(P_e\cap S_{F_\sigma\smallsetminus[0,n]}\neq\emptyset\,\,\,\lor\,\,\,Q_e\cap S_{F_\sigma\smallsetminus[0,n]}\neq\emptyset)
\end{align*}
The latter equivalence is easy to see; we justify the former as follows.
If $\varphi_e$ is total we have $P_e = \{X\}$ for some computable set $X$.
As $F$ is maximal independent, for every $\sigma\in2^{<\omega}$ we have $F_\sigma\subseteq^*X$ or $F_\sigma\subseteq^*\overline X$.
We can certainly leave out some finite initial segment of elements from $F_\sigma$ and have it be a subset (not mod finite) so the forwards implication holds. 
If $F_\sigma\smallsetminus[0,n]$ (an infinite computable set) is a subset of some element of $P_e$, then that element must not be immune and hence not bi-immune, so $\varphi_e$ is total.
If it is instead a subset of some element of $Q_e$, then the complement of that element in $P_e$ cannot be co-immune and hence not bi-immune, so $\varphi_e$ is total.
The reverse implication holds.
\par
$S_{F_\sigma\smallsetminus[0,n]}$ is a $\Pi^0_1[F]$-class uniformly in $i$ and $n$.
The emptiness of a $\Pi^0_1[F]$ class is a $\Pi^0_1[F]$ property, so by the above equivalence we get that $\mathrm{Tot}=\{e:\varphi_e\text{ is total}\}$ is~$\Sigma^0_2[F]$.
This equivalence does not depend a single $F$, so the $\Sigma^0_2$-index for $\mathrm{Tot}$ will be fixed.
As $\mathrm{Tot}$ is also $\Pi^0_2$ it will be $\Delta^0_2[F]$ with a fixed pair of indices, so it is reducible to~$F'$ with a fixed reduction.
$\mathrm{Tot}$ is therefore limit computable from $F$ by the relativised limit lemma, and its proof shows that there is a Turing functional which computes an approximation to $\mathrm{Tot}$ given our maximal independent family.
Hence by Lemma \ref{dftot} we can uniformly compute a dominating function.
\end{proof}
\pagebreak
\begin{theorem}
    \label{lmns4.3}
    There is a fixed Turing functional that from every dominating function computes a maximal independent family.
    That is, $\mathcal I\le_s\mathrm{DomFcn}$.
\end{theorem}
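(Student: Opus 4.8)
The plan is to reduce to the approximation framework already set up and then build the family by a finite-extension construction. By Lemma~\ref{dftot} it suffices to produce a maximal independent family of computable sets uniformly computably from an approximation $f$ to $\mathrm{Tot}$: the two Turing functionals compose, since a dominating function computes such an $f$ and $f$ will compute the family, yielding one fixed reduction from $\mathrm{DomFcn}$. Throughout I would build the sequence $\langle F_e\rangle_{e\in\omega}$ by deciding, at finite stages, the membership of finitely many numbers in finitely many $F_e$; the object returned is the coded set $F=\{\langle x,e\rangle:x\in F_e\}$, which is computable from $f$. Since the construction uses no nonuniform information beyond $f$, the resulting reduction is uniform, as a Medvedev reduction requires.

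The construction meets two families of requirements. For each $\sigma\in 2^{<\omega}$ the independence requirement $P_\sigma$ asks that $F_\sigma$ be infinite; for each index $k$ the maximality requirement $N_k$ asks that, whenever $\varphi_k$ is the characteristic function of an infinite, coinfinite computable set $R_k$, some cylinder satisfies $F_\sigma\subseteq^*R_k$ or $F_\sigma\subseteq^*\overline{R_k}$ (finite and cofinite sets are handled for free by $F_\emptyset=\omega$). I would interleave these in a priority construction: to act for $P_\sigma$ I feed a fresh large number into the $\sigma$-cylinder, and to act for $N_k$ I consult $f$ to guess whether $\varphi_k$ is total, reserving a currently uncommitted cylinder $\sigma_k$ and \emph{committing} it to $R_k$ (so that henceforth only elements of $R_k$ are fed into the $\sigma_k$-cylinder) whenever the current guess is ``total'' and $R_k$ looks infinite and coinfinite.

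Independence is maintained because each $P_\sigma$ acts infinitely often, and a commitment to a genuinely total, infinite--coinfinite target only restricts the numbers entering a cylinder to those lying in an infinite set, so the cylinder still receives infinitely many elements (the effect of a wrong guess is addressed below). Maximality holds because $\lim_s f(k,s)=\chi_{\mathrm{Tot}}(k)$, so for a genuinely total $R_k$ the guess stabilises on ``total,'' a final commitment of some $\sigma_k$ to $R_k$ is made and never retracted, and thereafter $F_{\sigma_k}\subseteq^*R_k$. The crucial computability point---that each slice $F_e$ is computable, not merely $f$-computable---follows from the observation that membership of a number in $F_e$ can be affected only by commitments at the finitely many cylinders of length at most $e$; hence $F_e$ is given by a computable rule depending on only finitely many $f$-provided parameters (the identities and timings of those commitments), which makes $F_e$ computable. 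Uniformly locating all these parameters across all $e$ is what requires the oracle, so the full sequence is only $f$-computable.

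The main obstacle is the coordination forced by the fact that totality is known only in the limit. A guess of ``total'' may be wrong, so $N_k$ may prematurely commit a cylinder to a target that is actually partial or finite, which could starve that cylinder and injure independence. I would resolve this with a release mechanism: whenever a committed cylinder fails to receive new elements (its target having run dry) or its totality guess flips, the commitment is cancelled and the cylinder is fed arbitrary numbers again. Verifying that this finite-injury bookkeeping settles---each $N_k$ being injured only finitely often because its guess stabilises---and that independence survives every retraction is the delicate part; the strength of the dominating function, packaged entirely in Lemma~\ref{dftot}, is exactly what makes the guessing and the attendant searches for convergence effective.
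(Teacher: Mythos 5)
Your high-level plan points in the right direction, and routing through Lemma~\ref{dftot} is legitimate in principle, but there is a genuine gap at the step you yourself call crucial: the computability of each individual slice $F_e$. In the construction as you describe it --- a single interleaved priority construction in which, at each stage, a fresh number is fed into some cylinder $\sigma$, thereby deciding its membership in $F_0,\dots,F_{|\sigma|-1}$ --- the set of numbers ending up in $F_e$ is determined not only by the finitely many commitments at cylinders of length at most $e$, but by \emph{every} action of \emph{every} requirement: an action for $P_\tau$ or a commitment at $\sigma_k$ with $|\tau|,|\sigma_k|>e$ still decides, for the number it handles, whether that number lands in $F_e$. Those actions depend on the $f$-guesses current at the stages they occur, and although each guess individually stabilises, the guesses never stabilise globally; so ``finitely many $f$-provided parameters'' does not describe what determines $F_e$, and the slices come out only $f$-computable. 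This is exactly the difficulty the paper's proof is organised around: it builds $F_e$ in a self-contained phase, uses the dominating function $h$ only to choose the block boundaries $r^e_n$, and then exhibits a computable function $p_e$ (nonuniformly in $e$) that can replace $h$ at all but finitely many steps, so the block sequence and hence $F_e$ is computable outright. Your construction keeps only $\lim_s f(k,s)$ and discards the domination; but it is precisely the domination --- knowing \emph{when} the relevant convergence has occurred, not merely \emph{whether} it will --- that makes each slice computable.

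Two further points. Your release trigger ``the committed cylinder fails to receive new elements'' is a $\Pi^0_2$ event and cannot be detected at a finite stage, so the finite-injury bookkeeping is not well defined as stated; the detectable trigger (the totality guess flipping) does not cover a total $\varphi_k$ whose set $R_k$ happens to meet the reserved cylinder only finitely. And committing one cylinder per requirement forces you to manage compatibility of nested commitments (the intersection of two targets may be finite). The paper sidesteps both issues by splitting \emph{every} length-$e$ cylinder along $V_{e,0}/V_{e,1}$ simultaneously in phase $e$, so that the $F_e$ freely generate the computable sets modulo finite sets and maximality falls out without reserving or releasing anything.
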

It is more than sufficient to prove that a dominating function computes a family which generates freely (up to $=^*$-equivalence) the Boolean algebra of computable sets mod finite sets.
Were a finite set obtainable from finite intersections and complements of elements of this family, then one could obtain (up to $=^*$) one set in the family by finite intersections and complements of the others, and they would not freely generate the Boolean algebra.
Maximality follows as we can get any other computable set by finite intersections and complements.
\begin{proof}
Let $\langle\psi_e\rangle_{e\in\omega}$ be an effective listing of the binary valued partial computable functions which are defined only on initial segments of $\omega$.
Let ${V_{e,k}=\{x:\psi_e(x)=k\}}$ for $k\in\{0,1\}$.
Let $h\in\mathrm{DomFcn}$.
\par
In each phase $e$ we will define the computable set $F_e$, we will assume that in the previous stages we have constructed $F_i$ where $i<e$, and hence $F_\sigma$ for $|\sigma|=e$; recall the definition from (\ref{fsigma}).
We will try to build $F_e$ to be independent from the previous sets while putting in elements of $V_{e,0}$.
We define a sequence $\langle r^e_n\rangle_{e\in\omega}$ using $h$ as an oracle, though later proving that this sequence is actually computable on its own, but not necessarily with a computable index.
From each of the intervals $[r^e_n,r^e_{n+1})$ we will try to put in at least one element from $V_{e,0}$ and leave at least one out, if that is not possible then we simply try to maintain independence.
The dominating function lets us eventually choose the next term of the sequence for this to occur when it is possible.
\\\\
\emph{Construction:} We build $F_e$ as follows.
Note that we use $\sigma$ only refer to a string of length $e$.
Define $r^e_0=0$, and recursively define $r^e_{n+1}$ in terms of $r^e_n$ as the least $r>r^e_n$ such that for each $\sigma$:
\begin{enumerate}
    \item[a)] if there are $u,w\in\operatorname{dom}(\psi_{e,h(r^e_n)})\cap F_\sigma$ with $r^e_n\le u<w$ and $u\in V_{e,1},w\in V_{e,0}$ then $r>w$, and
    \item[b)] $|[r^e_n,r^e_{n+1})|\cap F_\sigma\ge2$.
\end{enumerate}
Now, for $x\in[r^e_n,r^e_{n+1})$ we define $F_e(x)$, for each $\sigma$,
\begin{itemize}
    \item if the antecedent of (a) holds and $\psi_e$ is defined on $[r^e_n,r^e_{n+1})$, then $F_e(x)=\psi_e(x)$,
    \item otherwise, if $x=\min([r^e_n,r^e_{n+1})\cap F_\sigma)$ then $F_e(x)=1$, otherwise $F_e(x)=0$.
\end{itemize}
\emph{Verification:} We now verify that this construction satisfies the desired properties.
There are several things to check.
\pagebreak
\begin{claim}
    (1) Each set $F_\sigma$ is infinite, and hence $F$ is independent.
    (2) Each sequence $\langle r^e_n\rangle_{e\in\omega}$ is infinite, and hence for each $e$ there is a total functional $\Theta_e$ that gives $F_e$.
\end{claim}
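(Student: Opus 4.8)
The plan is to prove the two parts together by a single induction on the phase $e$, because they are mutually dependent: infinitude of the sets $F_\sigma$ with $|\sigma|=e$ is exactly what keeps condition (b) satisfiable and so keeps the sequence $\langle r^e_n\rangle_n$ growing, while infinitude of that sequence is what forces the newly built sets $F_{\sigma 0}$ and $F_{\sigma 1}$ to be infinite. The inductive hypothesis at phase $e$ is that every $F_\sigma$ with $|\sigma|=e$ is infinite (and that $\Theta_0,\dots,\Theta_{e-1}$ are already total). The base case is immediate: the only string of length $0$ is empty and $F_\emptyset=\omega$. Throughout I would use that for each $x$ there is a unique $\sigma$ of length $e$ with $x\in F_\sigma$, so the ``for each $\sigma$'' rule defining $F_e(x)$ is genuinely a pointwise definition.

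For part (2) at phase $e$, I would show $r^e_{n+1}$ always exists. Clause (a) imposes only a finite lower bound on $r$: since $\psi_{e,h(r^e_n)}$ has finite domain, across the $2^e$ strings $\sigma$ there are finitely many witnesses $w$, and one needs only $r$ exceeding all of them. Clause (b) asks that $[r^e_n,r)$ meet each $F_\sigma$ in at least two points; as each such $F_\sigma$ is infinite by the inductive hypothesis, a sufficiently large $r$ works for each of the finitely many $\sigma$. Taking $r$ past all these bounds yields the required least value, so the sequence never terminates. This makes $\Theta_e$ total: given $h$, it locates the (necessarily unique, since the strictly increasing $r^e_n$ cover $\omega$) interval containing $x$ and outputs $F_e(x)$.

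For part (1), fix $\sigma$ with $|\sigma|=e$ and show that each interval $[r^e_n,r^e_{n+1})$ contributes at least one element to each of $F_{\sigma 1}=F_\sigma\cap F_e$ and $F_{\sigma 0}=F_\sigma\cap\overline{F_e}$, whence both are infinite by part (2). In the ``otherwise'' case, clause (b) supplies at least two elements of $F_\sigma$ in the interval: the least receives value $1$ (entering $F_{\sigma 1}$) and at least one other receives value $0$ (entering $F_{\sigma 0}$). In the remaining case, where the antecedent of (a) holds and $\psi_e$ is defined on the interval so that $F_e=\psi_e$ there, the antecedent produces $u<w$ in $F_\sigma$ with $\psi_e(u)=1$ and $\psi_e(w)=0$; clause (a) guarantees $r^e_{n+1}>w$, so both $u$ and $w$ lie in the interval, placing $u\in F_{\sigma 1}$ and $w\in F_{\sigma 0}$. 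This completes the inductive step and, applied to all lengths, gives independence of $F$.

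The main obstacle is the second case of part (1): one must verify that the witnesses $u,w$ furnished by the antecedent of (a) actually fall inside the current interval, which is precisely what clause (a) of the recursive definition of $r^e_{n+1}$ was engineered to force. The only other delicate point is keeping the dependence of the defining rule on the unique $\sigma$ with $x\in F_\sigma$ explicit, so that $F_e$ is unambiguously defined and the two cases genuinely partition the behaviour on each interval.
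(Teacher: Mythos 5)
Your proof is correct and takes essentially the same approach as the paper: a single induction on $e$ in which the inductive infinitude of the sets $F_\sigma$ guarantees clause (b) can always be met (clause (a) only pushing $r$ finitely higher, since $\psi_{e,h(r^e_n)}$ has finite domain), so that $\langle r^e_n\rangle_n$ never terminates, and each interval $[r^e_n,r^e_{n+1})$ then supplies a point to each of $F_{\sigma\hatc0}$ and $F_{\sigma\hatc1}$. The only cosmetic difference is that the paper organises the second half by whether $\psi_e$ is partial or total, whereas you split on which defining clause of $F_e$ applies on a given interval; your version even yields the marginally sharper fact that \emph{every} interval, not just infinitely many, contributes to both sides.
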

We reason by induction on $e$.
Suppose for all $\sigma$ of length $e$ that $F_\sigma$ is infinite.
We can see that $\langle r^e_n\rangle_{e\in\omega}$ is infinite as $F_\sigma$ is infinite, so there is always a next $r$ above $r^e_n$ such that $[r^e_n,r)\cap F_\sigma\ge2$, so we can satisfy (b).
Whether (a) applies or not does not affect the existence of some next $r$, only how high it is, hence the sequence is infinite.
\par
To see that $F_{\sigma\hatc i}$ is infinite for $i\in\{0,1\}$ consider the cases where $\psi_e$ is partial and total.
If $\psi_e$ is partial, then as $F_\sigma$ is infinite, infinitely often the intervals $[r^e_n,r^e_{n+1})$ contain at least one element of $F_e$ and one element of $\overline F_e$, so as $\langle r^e_n\rangle_{e\in\omega}$ is infinite, each $F_{\sigma\hatc i}$ is also infinite.
If $\psi_e$ is total, then the antecedent of (a) will be satisfied infinitely often and each $F_{\sigma\hatc i}$ will be infinite.
\begin{claim}
    Each set $F_e$ is computable.
    \label{ieref}
\end{claim}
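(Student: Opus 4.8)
The plan is to prove the claim by induction on $e$, paralleling the induction already used in the previous claim; throughout I may assume $F_0,\dots,F_{e-1}$ are computable, hence that each $F_\sigma$ with $|\sigma|=e$ is a computable set. Since the value $F_e(x)$ on an interval $[r^e_n,r^e_{n+1})$ is given by a rule that is \emph{computable once the interval is known} — either $F_e(x)=\psi_e(x)$ (and $\psi_e$ is computable where it is defined) or the ``min-in, rest-out'' rule read off the computable set $F_\sigma$ — it suffices to show that the sequence $\langle r^e_n\rangle_n$ is computable. This is exactly what was foreshadowed in the construction: the sequence is defined using $h$ as an oracle, but I claim it is computable, albeit only with a non-uniform (non-computable) index.

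First I would dispose of the case where $\psi_e$ is partial. Then $\operatorname{dom}(\psi_e)$ is a proper initial segment $[0,m)$, so $\operatorname{dom}(\psi_{e,t})\subseteq[0,m)$ for every stage $t$. Consequently, as soon as $r^e_n\ge m$ the antecedent of clause (a) cannot be met for any $\sigma$, since it demands a witness $u\ge r^e_n$ lying in the domain; clause (a) is then vacuous and $r^e_{n+1}$ is the least $r$ satisfying clause (b) alone, a purely computable condition on the $F_\sigma$. Only finitely many $n$ satisfy $r^e_n<m$, and I would simply hardcode these finitely many values. The recursion is computable from that point on, so $\langle r^e_n\rangle_n$ is computable, and the non-computability of the index is precisely the need to hardcode the initial segment.

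The main obstacle is the case where $\psi_e$ is total. Here clause (a) genuinely consults $h$ through the finite set $\operatorname{dom}(\psi_{e,h(r^e_n)})$, and different dominating functions can in principle shift the endpoints $r^e_{n+1}$. The key observation I would exploit is that $h$ enters only the \emph{timing}, never the \emph{content}: on any interval where the antecedent of (a) fires we set $F_e=\psi_e$, a value independent of where the interval ends, so the endpoints matter only on intervals governed by the min-in/rest-out rule. I would therefore analyse each $\sigma$ according to whether $\psi_e$ exhibits a witnessing pattern (an element of $V_{e,1}$ followed within $F_\sigma$ by an element of $V_{e,0}$) cofinitely often or only finitely often. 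Since $\psi_e$ is total, its convergence modulus $c_{\psi_e}$ is a computable function, and because $h\ge^* c_{\psi_e}$, for all large $n$ the cutoff $h(r^e_n)$ already exposes every pattern clause (a) could use near $r^e_n$. Thus from some stage on, whether clause (a) fires and which witnesses it produces are determined by the computable behaviour of $\psi_e$ on the computable sets $F_\sigma$ rather than by $h$; feeding this computable determination into the recursion, together with hardcoding the finite initial segment on which $h$ might still matter, yields a computable $\langle r^e_n\rangle_n$.

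The delicate point, where I expect to spend the most care, is making the phrase ``from some stage on'' precise: one must check that the right endpoint produced at stage $n$ never outruns the portion of $\psi_e$ that $h$ will have revealed by stage $r^e_{n+1}$, so that the revealed domain keeps pace with the intervals and the $h$-driven recursion coincides on a tail with a computable one. I expect this to follow from the fact that clause (a) pushes $r^e_{n+1}$ only just past the witnesses already present in $\operatorname{dom}(\psi_{e,h(r^e_n)})$, while clause (b) advances the endpoint by only a bounded amount within each $F_\sigma$; combined with $h\ge^* c_{\psi_e}$ this should confine every genuine dependence on $h$ to finitely many initial stages, after which $F_e$ agrees with a computable set. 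Once that is established, each $F_e$ is computable, completing the induction.
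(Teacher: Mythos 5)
Your overall strategy is the same as the paper's: induct on $e$, reduce to computability of $\langle r^e_n\rangle_n$, split on whether $\psi_e$ is partial or total, observe that in the partial case clause (a) is vacuous once $r^e_n$ exceeds $\operatorname{dom}(\psi_e)$, and in the total case classify each $\sigma$ by whether the witnessing pattern occurs infinitely often and then replace $h$ by a dominated computable function on a tail, hardcoding the finite initial segment. The partial case is fine as you wrote it.

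The gap is in the total case, and it is exactly the point you flagged as delicate: dominating the convergence modulus $c_{\psi_e}$ is not the right condition. The witnesses $u,w$ for clause (a) must satisfy $r^e_n\le u<w$, but nothing places them anywhere near $r^e_n$: the first pair in $F_\sigma$ with $u\in V_{e,1}$, $w\in V_{e,0}$ above $r^e_n$ may sit arbitrarily far beyond $r^e_n$. Since $h(r^e_n)\ge c_{\psi_e}(r^e_n)$ only guarantees that $\operatorname{dom}(\psi_{e,h(r^e_n)})$ reaches up to input $r^e_n$, it does not guarantee that the witness pair is visible at all, so the antecedent of (a) may fail at stage $h(r^e_n)$ even though the pattern exists; your proposed resolution (that (a) pushes $r^e_{n+1}$ only just past witnesses ``already present'') presupposes they are present, which is the very thing in question. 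The repair is to dominate a different computable function: for each $\sigma$ with both $F_\sigma\cap V_{e,0}$ and $F_\sigma\cap V_{e,1}$ infinite (the set the paper calls $I_e$), let $p_e(m)$ be the least stage $s$ at which $\operatorname{dom}(\psi_{e,s})$ actually contains a witness pair $u,w\ge m$ as in (a) (and for the other $\sigma$, the least $s$ witnessing (b)). This $p_e$ is computable from the computable sets $F_\sigma,V_{e,0},V_{e,1}$, and since $h$ dominates \emph{every} computable function, $h(r^e_n)\ge p_e(r^e_n)$ for large $n$; minimality of the choice of $r$ then makes the $h$-driven recursion agree with the $p_e$-driven one on a tail. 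With that substitution your argument closes and coincides with the paper's.
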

We reason by induction $e$, so suppose for all $i<e$ that $F_i$ is computable.
Hence also each $F_\sigma$ is computable for $|\sigma|=e$.
If $\psi_e$ is partial, then for all sufficiently large $n$, the antecedent of condition (a) fails.
Hence $\langle r^e_n\rangle_{e\in\omega}$ is computable from $F_\sigma$ and hence computable, and so $F_e$ is computable.
\par
Otherwise, $\psi_e$ is total.
Define $I_e=\{\sigma:|\sigma|=e\;\land\;|F_\sigma\cap V_{e,0}|=|F_\sigma\cap V_{e,1}|=\infty\}$.
Define $p_e$ a function such that $p_e(m)$ is the least stage $s$ such that
\begin{itemize}
    \item if $\sigma\notin I_e$ then (b) holds with $r^e_n=m$ and $r=s$,
    \item if $\sigma\in I_e$, there are $u,w\in\operatorname{dom}(\psi_{e,s})$ such that $m\le u<w$ as in (a).
\end{itemize}
Let $p_e(m)=0$ if $m\neq r^e_n$ for any $n$.
Note that $p_e(m)$ is computable as $F_\sigma,V_{e,0},V_{e,1}$ are computable.
By definition $p_e(r^e_n)$ is a stage at which changes to $\psi_e$ will no longer affect~(a).
As for sufficiently large $n$ we have $h(r^e_n)\ge p(r^e_n)$, and $r$ is the minimum suitable value, it is sufficient to use this $p_e$ instead of $h$ to construct $F_e$.
Hence the individual set $F_e$ is computable.

It should be noted that the functions $p_e$ are distinct so this does not mean that $F$ is computable, it still depends on $h$ as we use it uniformly to bound all $p_e$.

\begin{claim}
    If $\psi_e$ is total, then for every string $\tau=\sigma\hatc a$ with $|\tau|=e+1$, $F_\tau\subseteq^*V_{e,0}$ or $F_\tau\cap V_{e,0}=^*\emptyset$.
\end{claim}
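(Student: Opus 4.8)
The plan is to split on whether the atom $F_\sigma$ lies in the set $I_e=\{\sigma:|\sigma|=e\;\land\;|F_\sigma\cap V_{e,0}|=|F_\sigma\cap V_{e,1}|=\infty\}$ introduced in the proof of Claim \ref{ieref}. Since $\psi_e$ is total, $V_{e,0}$ and $V_{e,1}$ partition $\omega$, so the case $\sigma\notin I_e$ is immediate: one of $F_\sigma\cap V_{e,0}$, $F_\sigma\cap V_{e,1}$ is finite, whence $F_\sigma\subseteq^*V_{e,0}$ or $F_\sigma\subseteq^*V_{e,1}$, and since $F_\tau\subseteq F_\sigma$ the desired dichotomy holds for either value of $a$. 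All the content is therefore in the case $\sigma\in I_e$.

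For $\sigma\in I_e$, the key fact I would establish is that $F_e$ agrees with $\psi_e$ on $F_\sigma$ up to a finite set, i.e.\ $F_e\cap F_\sigma=^*V_{e,1}\cap F_\sigma$. First I would show that the antecedent of clause (a) holds at all but finitely many $r^e_n$. Because $\sigma\in I_e$ and $\psi_e$ is total, above every threshold there genuinely exist $u<w$ in $F_\sigma$ with $u\in V_{e,1}$ and $w\in V_{e,0}$, and the least stage at which this pattern surfaces in the approximation $\psi_{e,s}$ is precisely what $p_e(r^e_n)$ records. As $h$ dominates the computable function $p_e$, we have $h(r^e_n)\ge p_e(r^e_n)$ for all large $n$, so the pattern already appears in $\psi_{e,h(r^e_n)}$ and the antecedent of (a) fires. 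Totality of $\psi_e$ also makes it defined on every interval $[r^e_n,r^e_{n+1})$, so the construction sets $F_e(x)=\psi_e(x)$ for every $x\in F_\sigma$ lying in such an interval. Only finitely many intervals are exceptional and each is finite, so $F_e=^*\psi_e$ on $F_\sigma$, as claimed.

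It then remains to translate this agreement into the statement. If $a=1$ then $F_\tau=F_\sigma\cap F_e=^*F_\sigma\cap V_{e,1}$, which is disjoint from $V_{e,0}$, giving $F_\tau\cap V_{e,0}=^*\emptyset$; if $a=0$ then $F_\tau=F_\sigma\cap\overline{F_e}=^*F_\sigma\cap V_{e,0}\subseteq V_{e,0}$, giving $F_\tau\subseteq^*V_{e,0}$. Either way the dichotomy holds. The main obstacle is the domination argument of the second paragraph: one must check that $p_e$ genuinely computes the stage by which the $V_{e,1}$-then-$V_{e,0}$ pattern appears in the approximation, and that totality of $\psi_e$ guarantees a true such pattern above every threshold, so that $h\ge^*p_e$ forces (a) to fire cofinitely often. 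This reuses, almost verbatim, the analysis of $p_e$ from the proof of Claim \ref{ieref}.
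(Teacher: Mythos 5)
Your proposal is correct and follows the same route as the paper: split on whether $\sigma\in I_e$, dispose of the case $\sigma\notin I_e$ by noting $F_\sigma\subseteq^*V_{e,i}$ for some $i$, and in the case $\sigma\in I_e$ argue that the construction forces $F_{\sigma\hatc i}\subseteq^*V_{e,i}$. The paper compresses your second paragraph into the phrase ``by construction in Phase $e$''; your unpacking of why $h\ge^*p_e$ makes clause (a) fire cofinitely often is exactly the intended justification.
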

Define $I_e$ as in Claim \ref{ieref}.
If $\sigma\notin I_e$ then $F_\sigma\subseteq^* V_{e,i}$ for some $i$ and the claim holds. 
Otherwise, by construction in Phase $e$ we have $F_{\sigma\hatc i}\subseteq^* V_{e,i}$.
Hence $F_{\sigma\hatc0}\subseteq^*V_{e,0}$ and $F_{\sigma\hatc1}\cap V_{e,0}=^*\emptyset$, and the claim holds.
\par
This claim shows that the $=^*$-classes of the $F_e$ freely generate the $=^*$-classes of the computable sets, and in particular we have that $F$ is a maximal independent family.
\end{proof}

\subsection{The Boolean algebra of $\omega$-c.a.\ sets}
We follow the treatment of $\omega$-computably approximable sets given by Nies \cite{Nies:2008}.
For a time, these were referred to as $\omega$-c.e.\ sets; this was misleading so the term $\omega$-c.a.\ was introduced by some authors, Greenberg and Downey \cite{DG:2020} for example.
\begin{definition} \label{wca}
    A set $Z$ is called $\omega$-c.a.\ if it has a computable approximation $\langle Z_s\rangle$ such that there is some function $f$ which bounds the number of changes of each element in~$Z$. Precisely, that is
    $$f(x)\ge|\{s>x:Z_s(x)\neq Z_{s-1}(x)\}|\text{ for each }x.$$
    We refer to $f$ as the computable change bound for $Z$.
\end{definition}
This notion is of interest as the $\omega$-c.a.\ sets form a boolean algebra with unions and complements, whereas classes like the c.e.\ sets do not.
Nies \cite[Prop.\ 1.4.4]{Nies:2008} proves the following characterisation of $\omega$-c.a.\ sets.
\begin{proposition}
    $Z$ is $\omega$-c.a.\ $\iff Z\le_{wtt}\emptyset' \iff Z\le_{tt}\emptyset'$.\\
    Moreover, the equivalences are effective.
    \qed
\end{proposition}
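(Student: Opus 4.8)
The plan is to prove the three conditions equivalent by the cycle
$\omega\text{-c.a.}\Rightarrow Z\le_{tt}\emptyset'\Rightarrow Z\le_{wtt}\emptyset'\Rightarrow\omega\text{-c.a.}$,
of which the middle implication is immediate, since a total truth-table functional has computably bounded use and hence every tt-reduction is already a wtt-reduction. Effectiveness I would record at the end by noting that each construction is uniform in the relevant index---a change-bound plus approximation index, a truth-table index, or a wtt-index together with its computable use bound---so that passing between witnesses is itself computable.

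For $\omega\text{-c.a.}\Rightarrow Z\le_{tt}\emptyset'$, fix a computable approximation $\langle Z_s\rangle$ with computable change bound $f$. The value $Z_x(x)$ is computable, and after stage $x$ the bit $Z(x)$ flips at most $f(x)$ times, so $Z(x)=Z_x(x)\oplus(m\bmod 2)$, where $m$ is the number of stages $s>x$ with $Z_s(x)\ne Z_{s-1}(x)$. For each $k$ the event ``at least $k$ such changes occur'' is c.e.\ uniformly in $(x,k)$, hence $m$-reducible to $\emptyset'$ via a computable $h$, so it holds iff $h(x,k)\in\emptyset'$. The truth-table for $x$ therefore queries the finitely many positions $h(x,1),\dots,h(x,f(x))$, reads off $m$ as the number of positive answers, and outputs $Z_x(x)\oplus(m\bmod 2)$; this boolean function is total and computable uniformly in $x$, witnessing $Z\le_{tt}\emptyset'$.

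For $Z\le_{wtt}\emptyset'\Rightarrow\omega\text{-c.a.}$, write $Z(x)=\Phi^{\emptyset'}(x)$ with computable use bound $g$, and fix the standard approximation $\langle\emptyset'_s\rangle$ in which each bit flips at most once. I would set $Z_s(x)$ to be the output of $\Phi^{\emptyset'_s}(x)$ when this halts within $s$ steps querying only positions below $g(x)$, and $0$ otherwise. Since $\emptyset'_s\restriction g(x)$ eventually equals $\emptyset'\restriction g(x)$ and the true computation halts, $\lim_s Z_s(x)=Z(x)$, so $\langle Z_s\rangle$ is a computable approximation.

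The main obstacle is bounding the number of changes by a computable function. I would argue that $Z_s(x)$ can change only when either $\emptyset'_s\restriction g(x)$ changes---at most $g(x)$ times, one per bit---or when, for a fixed oracle segment, the step-bounded computation first halts, which contributes at most one change per segment since halting is monotone in the step count. With at most $g(x)+1$ distinct segments this yields at most $2g(x)+1$ changes, so $f(x)=2g(x)+1$ is a computable change bound and $Z$ is $\omega$-c.a. The delicate points are that $\Phi$ need not converge on the intermediate approximations $\emptyset'_s$ (absorbed into the default value $0$) and that the change count must be tracked per oracle-segment epoch rather than per stage; both are routine once the epoch decomposition is in place. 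Effectiveness of every implication then follows, as $h$, the truth-table, and the approximation together with $f$ are all produced uniformly from the given indices.
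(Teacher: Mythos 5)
Your proof is correct, and since the paper states this proposition without proof (citing Nies, Prop.~1.4.4, whose argument runs along exactly these lines: count the parity of changes to build the truth table, and approximate the wtt-computation against $\emptyset'_s$ with the use bound controlling the number of flips), there is nothing substantive to contrast. The only point worth noting is that the paper's subsequent discussion relies on the specific normal form of the tt-reduction ($x\in Z$ iff some string in $D_{g(x)}$ is an initial segment of $\emptyset'$), which your truth table does not produce directly but which can be extracted from it effectively.
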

As each truth-table reduction is determined by a (partial) computable function, this provides us with an effective listing of the $\omega$-c.a.\ sets.
A rough summary follows, though more detail is given by Nies \cite[Prop.\ 1.4.4]{Nies:2008}.
Let $\langle \psi_e\rangle_{e\in\omega}$ be an effective listing of partial computable functions which are defined on an initial segment of $\omega$
It is useful to restrict to these functions; it does not affect the presence of the total functions.
Recall the notion of strong indices for finite sets, $D_e=\{x_1<\cdots<x_n\}$ for $e=2^{x_1}+\cdots+2^{x_n}$.
According to Nies \cite[Prop.\ 1.2.21]{Nies:2008}, $Z\le_{tt}\emptyset'$ is equivalent to the existence of a computable $g$ such that the elements $x$ of $Z$ are those such that $D_{g(x)}$ (as a finite set of strings) contains an initial segment of $\emptyset'$.
In the original statement, this is written as a disjunctive normal form of a Boolean formula to fit with the definition of truth-table reductions.
This gives us the required effective listing $\langle V_e,f_e\rangle_{e\in\omega}$ by defining:
$$V_e=\{x:x\in\operatorname{dom}(\psi_e)\,\land\,(\exists\sigma\in D_{\psi_e(x)})[\sigma\preceq\emptyset']\},$$
$$f_e(x)=2\max\{|\sigma|:\sigma\in D_{\psi_e(x)}\}.$$
These $f_e$ are twice the use bounds for the functional that gives the weak truth-table reduction to $V_e$, as described by Nies \cite[Prop.\ 1.2.21]{Nies:2008}.
Conveniently, this also gives us the computable approximations:
$$V_{e,s}=\{x:x\in\operatorname{dom}(\psi_{e,s})\,\land\,(\exists\sigma\in D_{\psi_{e,s}(x)})[\sigma\preceq\emptyset'[s]]\},$$
$$f_{e,s}=2\max\{|\sigma|:\sigma\in D_{\psi_{e,s}(x)}\}.$$
It is important to note that $f_e$ converges exactly where and when $\psi_e$ does.
\par
One can generalise the definition of $\omega$-c.a.\ to higher order types, as described in depth by Downey and Greenberg \cite{DG:2020}.
A slightly different approach is taken compared to Definition \ref{wca}.
Here, $\omega$ refers to the order type and $\mathbb N$ to the underlying set.
\begin{definition}
    Let $\mathcal R=\langle\mathbb N,<_{\mathcal R}\rangle$ be a computable well-ordering of $\mathbb N$.
    A set $Z$ is called $\mathcal R$-computably approximable if there is a computable approximation $\langle Z_s\rangle$ for $Z$ and there is a computable function $g:\mathbb N^2\to\mathbb N$ such that each map $\lambda s.g(x,s)$ for fixed $x$ is non-increasing with respect to $<_{\mathcal R}$, and whenever $Z_s(x)\neq Z_{s-1}(x)$ we have ${g(x,s)\neq g(x,s-1)}$.
\end{definition}
The existence of this function $g$ with the well-foundedness of $\mathcal R$ ensures that the changes for each element are finite, and hence the limit $Z$ does exists.
This definition agrees with the above definition of $\omega$-c.a.
Intuitively, this says that changes to $Z_s(x)$ are counting down below some ordinal, giving us a more complex structure of finite changes below $\Delta^0_2$.
This is not always the same as $\omega$-c.a.\ for transfinite order types, as when $g$ `steps down' from a limit ordinal we have the additional information of the step $s$ at which this happens.
Identifying ordinals with orderings of $\mathbb N$, for ordinals $\alpha<\beta$, the class of $\alpha$-c.a.\ sets contains the class of $\beta$-c.a.\ sets, so the higher the ordinal, the looser the constraint on how the elements change.
The following theorem describes a MAD family for the Boolean algebra of $\omega$-c.a.\ sets in this way.
The construction gives an example as low as possible in this hierarchy.
It is interesting to note that by the Medvedev equivalence between MAD families and maximal towers shown by Lempp~et~al.\ \cite[Fact 2.2]{LMNS:2023} this also gives an $(\omega+1)$-c.a.\ maximal tower for the $\omega$-c.a.\ sets.

\begin{theorem}
    There is a MAD family $F$ for the $\omega$-c.a.\ sets such that $F$ is $(\omega+1)$-c.a.
\end{theorem}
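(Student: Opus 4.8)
The plan is to adapt the co-c.e.\ MAD-family construction of Lempp et al.\ by feeding it the effective listing $\langle V_e,f_e\rangle_{e\in\omega}$ of the $\omega$-c.a.\ sets set up above, together with the computable approximations $\langle V_{e,s}\rangle$ and $\langle f_{e,s}\rangle$. I build $F=\langle F_e\rangle_{e\in\omega}$ so that each $F_e$ is an almost-subset of $V_e$, dedicating $F_e$ to the requirement $R_e$: if $V_e$ is infinite then some member of $F$ meets it infinitely. Recall that the $\psi_e$ are defined on initial segments, so if $\psi_e$ is partial its domain is finite; then $V_e$ and $F_e$ are finite, $R_e$ is vacuous, and no harm is done. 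When $\psi_e$ is total, $V_e$ is a genuine $\omega$-c.a.\ set with computable change bound $f_e$, and this is the case the construction must handle. Almost disjointness is maintained by a priority restraint: $F_e$ only enrolls elements of $V_e$ lying above the finitely many points currently claimed by the higher-priority sets $F_0,\dots,F_{e-1}$.

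In the construction, at stage $s$ and for each $e\le s$, $F_{e,s}$ greedily enrolls the least element $x$ of $V_{e,s}$ that is above the current restraint and not yet enrolled, and it expels $x$ if $V_{e,s}$ later drops $x$ (so that $F_e$ tracks $V_e$) or if a higher-priority set claims $x$. Crucially, while $\psi_{e,s}(x)\uparrow$ I keep $x\notin F_{e,s}$, so nothing happens at $x$ in $F_e$ until the listing has committed a value of $\psi_e(x)$; only at that point does the change bound $f_e(x)$ become available.

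For the verification, almost disjointness is immediate from the restraint discipline: $F_e$ overlaps each higher-priority $F_j$ only on the finitely many elements enrolled before that restraint settled. Maximality follows from a dichotomy applied to the final sets: for a total $\psi_e$ with $V_e$ infinite, either some $F_j$ with $j<e$ already has infinite intersection with $V_e$, in which case $R_e$ holds outright; or every $F_j\cap V_e$ ($j<e$) is finite, so $V_e\smallsetminus\bigcup_{j<e}F_j$ is cofinite in $V_e$ and hence infinite, whence the greedy rule keeps finding fresh elements and $F_e\cap V_e$ is infinite. Either way $R_e$ is met, and since every infinite $\omega$-c.a.\ $R$ occurs as some such $V_e$, the family is maximal.

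The heart of the argument, and the step I expect to be the main obstacle, is showing that the coded family $F$ is exactly $(\omega+1)$-c.a. For each element $\langle x,e\rangle$ I would exhibit a non-increasing map into the order type $\omega+1$: assign the top point while $\psi_{e,s}(x)\uparrow$, during which $\langle x,e\rangle$ stays out of $F$ and never changes; at the first stage where $\psi_{e,s}(x)\downarrow$ drop from the top to the finite value $f_e(x)$ padded by a constant that accounts for the finitely many possible injuries from the higher-priority sets $F_0,\dots,F_{e-1}$; and thereafter let each genuine change of $\langle x,e\rangle$---forced either by a change of $V_{e,s}(x)$ or by one of those injuries---push the ordinal strictly downward inside the finite part. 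This realises a single transfinite level followed by a finite countdown, giving $(\omega+1)$-c.a. The delicate points are (i) verifying that the padding constant genuinely bounds the injuries so the finite countdown is never reset, which requires bounding how often a higher-priority set can reclaim a given $x$, and (ii) confirming that the construction cannot be improved to $\omega$-c.a.: since $f_e$ is defined exactly where $\psi_e$ is, no total computable function can serve as a uniform $\omega$-c.a.\ change bound across all $\langle x,e\rangle$, so the top placeholder is unavoidable and $\omega+1$ is the right level.
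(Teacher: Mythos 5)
Your overall strategy---run a finite-injury MAD construction over the effective listing $\langle V_e,f_e\rangle$ and use the single transfinite level of $\omega+1$ to wait for $\psi_e(x)$ to converge before starting a finite countdown---is the same idea the paper uses. But there are two genuine gaps. First, your family can contain finite members: when $\psi_e$ is partial (so $V_e$ is finite), or when $V_e\smallsetminus\bigcup_{j<e}F_j$ happens to be finite, your $F_e$ is finite, and a MAD family must consist of infinite sets. Saying ``no harm is done'' does not repair this; the paper devotes a specific device to it, interleaving the sets $M_{2e}=V_e$ with $M_{2e+1}=\omega$, building auxiliary sets $H_e\subseteq M_e$ with coinfinite union (via the $x\ge 2n^2$ spacing), and setting $F_e=H_{2e}\cup H_{2e+1}$ so that every member is guaranteed infinite.

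Second, and more seriously, your rule that $F_e$ \emph{expels} $x$ when a higher-priority set later claims it breaks the $(\omega+1)$ counting. In the definition of $\mathcal R$-c.a., every change of $F(\langle x,e\rangle)$ must strictly decrease the ordinal count, so the first change forces you off the level $\omega$ and commits you, at that very stage, to a finite bound on all future changes. At that stage you know $f_e(x)$ (since $x\in V_{e,s}$ forces $\psi_e(x)\!\downarrow$), but the injuries you must also bound come from sets $F_j$ with $j<e$, and the number of times $F_j$ can claim and release $x$ is controlled by $f_j(x)$, which is undefined until $\psi_j(x)$ converges---possibly long after your commit point, and you cannot decide which $\psi_j(x)$ will ever converge. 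So your ``padding constant'' is not computable when it is needed; this is exactly the obstacle you flag in (i), and it is not a technicality but a reason the design fails. The paper sidesteps it by never removing an element of $H_e$ on account of other $H_i$'s: disjointness is enforced only at entry time (an element enters $H_e$ via some $n$ only if it is currently outside $\bigcup_{i<n}H_{i}$, and each $n$ contributes at most one element, giving $|H_e\cap H_k|\le k$), so $H_e(x)$ changes only in response to changes of $M_e(x)$ and the single bound $f_e(x)+1$ suffices. You would need to restructure your construction along these lines rather than patch the constant. (Your point (ii), that the family cannot be $\omega$-c.a., is not part of the statement and need not be proved.)
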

\begin{proof}
    Let $\langle V_e,f_e\rangle_{e\in\omega}$ be a listing of the $\omega$-c.a.\ sets and their computable change bounds given by truth-table reductions to $\emptyset'$, as described above and by Nies \cite[Prop.\ 1.4.5]{Nies:2008}.
    We use the approximations $V_{e,s},f_{e,s}$ to these as described above.
    Define the sequence $\langle M_e\rangle_{e\in\omega}$ such that $M_{2e}=V_e$ and $M_{2e+1}=\omega$ for each $e$.
    The construction builds $\omega$-c.a.\ sets $H_e$ using the sets $M_e$, with the choice of $M_{2e+1}$ guaranteeing that $H_{2e+1}$ is always infinite.
    The family $F$ defined by $F_e=H_{2e}\cup H_{2e+1}$ will be constructed to have all desired properties, using $H_{2e+1}$ to ensure each element is infinite while maintaining almost disjointness.
    In order to verify that $F$ is maximal almost disjoint the construction will ensure the following requirements are met for every infinite $M_e$:
    $$P_e:\forall n\left[M_e\smallsetminus\bigcup_{i<n}H_i\text{ infinite}\right]\implies M_e\cap H_e\text{ infinite.}$$
    \emph{Construction:}\par
    \emph{Stage 0}: For all $e$, say that every $n>e$ is \emph{free to act on $H_e$}.\par
    \emph{Stage $s>0$:}
    We work on constructing the sets $H_e$ for $e<s$ in parallel.\par
    \emph{Substage $e<s$:}
    Look for $n$ with $e<n<s$ such that $n$ is free to act on $H_e$, and such that there is an $x\ge2n^2$ in $M_{e,s}\smallsetminus\bigcup_{i<n}H_{i,s}$.
    If found then choose $n$ least and then $x$ least for $n$.
    Put $x$ into $H_e$ and say $n$ is no longer free to act on $H_e$.
    If some $y$ was added due to the action of $m$ on $H_e$, and $y$ was removed from $M_e$ at stage $s$, then remove $y$ from $H_e$ and say $m$ is free to act on $H_e$.
    \\\\
    \emph{Verification:}
    We verify that each condition $P_e$ is met, and that all required properties of $H$ and $F$ hold.
    \begin{claim}
        Each set $H_e$ is $\omega$-c.a.
    \end{claim}
    For each $x$, $H_e(x)$ can only change to respond to a change to $M_e(x)$, and only once per change to $M_e(x)$.
    Hence, if $g$ is a computable function which bounds the number of changes of its input in $M_e$, it also bounds the number of changes of its input in $H_e$.
    Each column $H_e$ is therefore $\omega$-c.a.
    \begin{claim}
        The union $\bigcup_iH_i$ is coinfinite.
    \end{claim}
    For each $e$, each $n>e$ contributes at most one $x$ to $H_e$.
    Let $N\in\omega$ and say we have $x<2N^2$ in this union.
    It must be due to the action of some $n<N$ on some $H_e$ with $e<n$, so there are certainly less than $N^2$ many such $x$.
    Hence the union must be coinfinite.
    By the choice of each $M_{2e+1}$ this also implies that each $H_{2e+1}$ and hence each $F_e$ is infinite.
    \begin{claim}
        Each $P_e$ is satisfied.
    \end{claim}
    Suppose its hypothesis holds, so for every $n$ we have that $M_e\smallsetminus\bigcup_{i<n}H_i$ is infinite.
    For every $n$ there will eventually be some stage $s$ where we have $x\ge2n^2$ such that $x\in M_{e,s}\smallsetminus\bigcup_{i<n}H_{i,s}$ and $x$ does not leave $M_e$ past stage $s$.
    The least such $x$ that has not been included by a smaller $n$ will be added to $H_e$.
    Every $n$ will therefore act to add one new $x$ from $M_e$ into $H_e$, so $M_e\cap H_e$ is infinite, and $P_e$ is satisfied.
    \begin{claim}
        The family $H$, and hence also $F$, is almost disjoint.
    \end{claim}
    Suppose we have $e,k$ with $e<k$, we show that $H_e\cap H_k$ is finite. 
    If $x$ last enters~$H_e$ due to the action of $n$ at stage $s$, then $x\notin\bigcup_{i<n}H_{i,s}$.
    Existence of such an $s$ is guaranteed as each element can only have finitely many changes.
    As $x\in H_k$, $x$ must last enter $H_k$ at some stage $t<s$, as it can never enter $H_k$ after stage $s$.
    This implies that $n\le k$, as otherwise $x\in\bigcup_{i<n}H_{i,s}$ which contradicts $x$ entering $H_e$ at stage $s$ due to the action of $n$.
    As each $n$ can only act to include one such $x$, there can only be $k$ many such $x$, so $|H_e\cap H_k|\le k$.
    Hence $H$ is almost disjoint, and $F$ must also be almost disjoint.
    \begin{claim}
        $F$ is maximal almost disjoint.
    \end{claim}
    It is sufficient that for every infinite $M_e$ we find $k$ such that $M_e\cap F_k$ is infinite.
    As every $P_e$ is satisfied, if its hypothesis holds then $M_e\cap H_e$ and hence $M_e\cap F_{\lfloor e/2\rfloor}$ is infinite.
    If its hypothesis does not hold, then $M_e\subseteq^*\bigcup_{i<n}H_i$ for some $n$, and hence there is some $i<n$ such that $M_e\cap H_i$ is infinite.
    \begin{claim}
        $F$ is $(\omega+1)$-c.a.
    \end{claim}
    For this claim we will refer to $\omega$ as the order type (and element of $\omega+1$) and $\mathbb N$ as the underlying set.
    Identify $\omega+1$ with a computable well ordering of $\mathbb N$ with the same order type.
    Define $g:\mathbb N^2\to\omega+1$ as follows, on each pair $\langle e,x\rangle$.
    Whenever the first input is not of the form $\langle e,x\rangle$, output $0$.
    Define $g(\langle e,x\rangle,s)=\omega$ where $f_{e,s}(x)$ is undefined (note that before $f_{e,s}(x)$ converges no elements may enter $V_e$).
    If $s$ is the first stage such that $f_{e,s}(x)$ converges, then define $g(\langle e,x\rangle,s)=f_{e,s}(x)+1$.
    This is the sum of the change bounds for $H_{2e}$ and $H_{2e+1}$.
    For all stages $t>s$, if $F_{e,t}(x)\neq F_{e,t-1}(x)$ then set $g(\langle e,x\rangle,t)=g(\langle e,x\rangle,t-1)-1$.
    Otherwise set $g(\langle e,x\rangle,t)=g(\langle e,x\rangle,t-1)$.
    Then $g$ is a non-increasing function into $\omega+1$, and when $F_{e,s}(x)\neq F_{e,s-1}(x)$ we have $g(\langle e,x\rangle, s)\neq g(\langle e,x\rangle, s-1)$, hence $F$ is $(\omega+1)$-c.a.
\end{proof}

\section{Ideal Independent Families} \label{iif}
The cardinal characteristic $\smm$ of maximal ideal independent families was introduced by Monk \cite{Mon:2008}.
A family $\mathcal A\subseteq\infsets$ is said to be \emph{ideal independent} if for every finite subfamily $\mathcal X\subseteq\mathcal A$ and $A\in\mathcal A\smallsetminus\mathcal X$, the set $A\smallsetminus\bigcup\mathcal X$ is infinite.
Monk studied this and other cardinal characteristics in the setting of Boolean algebras.
Given a Boolean algebra of sets $\mathbb B$, he defines:
$$\smm(\mathbb B)=\min\{|\mathcal A|:\mathcal A\subseteq\mathbb B,\mathcal A\text{ is maximal ideal independent}\}.$$
The letter $\mathfrak s$ is used due to similarities with the cardinal invariant of spread described by Monk \cite{Mon:1996}, not because of any relation to the splitting number.
It was originally written with a plain font as $s_{\mathrm{mm}}$, though that nuance seems to have been lost in more recent work.
It is simply referred to as $\smm$ in the case where the Boolean algebra is subsets of $\omega$ modulo finite sets.
\par
The characteristic $\smm$ was studied in the setting of set theory by Monk \cite{Mon:2008,Mon:2012}; by Cancino, Guzm\'an, and Miller \cite{CGM:2021}; and by Bardyla et al.\ \cite{BCFS}.
Their work includes proofs of relations such as $\mathfrak d,\mathfrak u\le\smm$, and that $\smm$ and $\mathfrak i$ are independent.
It is still an open question whether or not $\smm<\mathfrak a$ is consistent with ZFC, as is the case with $\mathfrak i$.
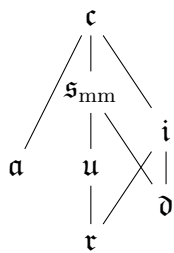
\begin{figure}[h]
\begin{tikzpicture}
    \node (u) {$\mathfrak u$};
    \node (smm) [above of = u] {$\smm$};
    \node (c) [above of = smm] {$\mathfrak c$};
    \node (i) [right of = u,yshift=0.5cm] {$\mathfrak i$};
    \node (r) [below of = u] {$\mathfrak r$};
    \node (d) [right of = r,yshift=0.5cm] {$\mathfrak d$};
    \node (a) [left of = u] {$\mathfrak a$};

    \draw (u) -- (r);\draw (c) -- (smm);\draw (c) -- (a);
    \draw (c) -- (i);\draw (i) -- (r);\draw (i) -- (d);
    \draw (smm) -- (u); \draw (smm) -- (d);
\end{tikzpicture}
\centering
\caption{Placement of $\smm$ among other cardinal characteristics}
\label{smmdiag}
\end{figure}
As Lempp et al.\ \cite{LMNS:2023} did for the independence and ultrafilter numbers, we define a mass problem for the maximal ideal independent families in a Boolean algebra.
Let~$\mathbb B$ be a Boolean algebra of sets modulo finite sets.
It is useful to use the following characterisation of maximal ideal independent families.
A sequence $\langle F_e\rangle_{e\in\omega}$ in $\mathbb B$ is maximal ideal independent if it is ideal independent, and for any infinite and coinfinite set $R\in\mathbb B$, either there is an $n$ such that $R\subseteq^*\bigcup_{i<n}F_i$, or there is some $e$ and finite set $B\subseteq\omega$ such that $F_e\subseteq^*R\cup\bigcup_{i\in B}F_i$.
It should be noted that every almost disjoint or independent family is also ideal independent, but MAD families and maximal independent families are not necessarily maximal ideal independent.
We denote by $\mathcal I^{id}$ the mass problem of maximal ideal independent families for computable sets modulo finite sets.
This notation is chosen to represent the similar behaviour it has to the mass problem of (full) independent sets.
\par
The main results of this section show that $\mathcal I^{id}\equiv_s\mathrm{DomFcn}$, and hence $\mathcal I^{id}$ joins the equivalence with $\mathcal I$ and $\mathcal U$, as per Lempp et al.\ \cite{LMNS:2023}.
The proofs follow similar approaches to those used by Lempp et al.\ for independent sets and ultrafilter bases.
These results show that even more of Figure \ref{smmdiag} collapses in the setting of computability.
We also show that there is a $\Delta^0_2$ maximal ideal independent family for the $K$-trivials, another Boolean algebra of sets that sits just barely above the computable sets.

\subsection{Ideal independent families and dominating functions}
\begin{theorem}
    \label{dfbelowiid}
    There is a fixed Turing functional that from every maximal ideal independent family computes a dominating function. 
    That is, $\mathrm{DomFcn}\le_s\mathcal I^{id}$.
\end{theorem}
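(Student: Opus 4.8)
The plan is to mirror the proof of Theorem \ref{lmns4.2} for full independent families, replacing the atoms $F_\sigma$ by computable ``probe sets'' that maximality forces to lie almost inside a given computable set or its complement. As there, I would start from the sequence $\langle P_e\rangle_{e\in\omega}$ of $\Pi^0_1$-classes of Lemma \ref{lmns3.7}, put $Q_e=\{X:\overline X\in P_e\}$, and write $S_C=\{Y\in2^\omega:C\subseteq Y\}$, which is a $\Pi^0_1[F]$-class whenever $C$ is computable from $F$. The objective is again to exhibit, for a fixed maximal ideal independent family $F=\langle F_e\rangle_{e\in\omega}$, a single $\Sigma^0_2[F]$ index for $\mathrm{Tot}$; since $\mathrm{Tot}$ is also $\Pi^0_2$ it will be $\Delta^0_2[F]$ and limit-computable from $F$, and Lemma \ref{dftot} then turns an approximation to $\mathrm{Tot}$ into a dominating function, all by one uniform procedure.

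The essential change is the choice of probe sets. Instead of the $F_\sigma$ I would use the computable collection $\mathcal G$ consisting of the \emph{Type A} sets $F_{e'}\smallsetminus\bigcup_{i\in B}F_i$ with $B$ finite and $e'\notin B$, together with the \emph{Type B} sets $\overline{\bigcup_{i<n}F_i}$ for $n\in\omega$. Ideal independence makes every Type A set infinite, and it also forces every finite union $\bigcup_{i<n}F_i$ to be coinfinite, since $F_n\smallsetminus\bigcup_{i<n}F_i$ must be infinite; hence every Type B set is infinite too. Each member of $\mathcal G$ is computable from $F$ by a procedure with a fixed index, so I can then establish the characterization
\begin{equation*}
\varphi_e\text{ total}\iff\exists G\in\mathcal G\,\exists m\,\bigl(G\smallsetminus[0,m]\subseteq X\text{ for some }X\in P_e\cup Q_e\bigr),
\end{equation*}
whose right-hand side is $\Sigma^0_2[F]$ (a pair of number quantifiers in front of the nonemptiness of the $\Pi^0_1[F]$-classes $P_e\cap S_{G\smallsetminus[0,m]}$ and $Q_e\cap S_{G\smallsetminus[0,m]}$), exactly as in Theorem \ref{lmns4.2}.

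The reverse implication is routine: any infinite computable subset of an element of $P_e$, or of the complement of an element of $P_e$ via $Q_e$, witnesses that this element fails to be bi-immune, so by Lemma \ref{lmns3.7} $\varphi_e$ is total; this applies to every $G\in\mathcal G$ precisely because each is infinite. The forward implication, where maximality is used, is the step I expect to be the main obstacle. When $\varphi_e$ is total we have $P_e=\{X\}$ with $X$ computable; if $X$ is finite or cofinite the right-hand side holds trivially (a cofinite set almost-contains every $G$, a finite set has cofinite complement), so the interesting case is $X$ infinite and coinfinite, where I would apply the maximality characterization given above to $R=X$. The first alternative $X\subseteq^*\bigcup_{i<n}F_i$ produces the Type B witness $\overline{\bigcup_{i<n}F_i}\subseteq^*\overline X\in Q_e$, and the second alternative $F_{e'}\subseteq^* X\cup\bigcup_{i\in B}F_i$ produces the Type A witness $F_{e'}\smallsetminus\bigcup_{i\in B}F_i\subseteq^* X\in P_e$. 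The delicate points are that the second alternative must be taken with $e'\notin B$ (otherwise it is vacuous), which is exactly the genuine failure of ideal independence caused by adjoining $R$, and that the two shapes of probe set between them cover both alternatives together with the coinfiniteness of finite unions. Once the characterization is in hand, the $\Delta^0_2[F]$ argument and Lemma \ref{dftot} finish the proof, with a single Turing functional since no step depends on the particular family $F$.
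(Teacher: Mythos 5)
Your proposal is correct and follows essentially the same route as the paper: your Type A and Type B probe sets are exactly the sets $F_\sigma$ for the restricted strings $\sigma$ (exactly one $1$, or all $0$s) that the paper quantifies over, and the infinitude-via-ideal-independence argument, the use of the maximality characterization for the forward implication, and the $\Delta^0_2[F]$ conclusion via Lemma \ref{dftot} all match. The only (harmless) addition is your explicit treatment of the finite and cofinite cases of $X$, which the paper leaves implicit.
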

\begin{proof}
We use the setting from Theorem \ref{lmns4.2}. 
Let $F=\langle F_e\rangle_{e\in\omega}$ be a maximal ideal independent family. 
Define the sets $F_\sigma$ as in (\ref{fsigma}).
Let $\langle P_e\rangle_{e\in\omega}$ and $\langle Q_e\rangle_{e\in\omega}$ be sequences of $\Pi^0_1$-classes as in Lemma \ref{lmns3.7}.
As $F$ is maximal ideal independent, for any infinite, co-infinite, computable $R$ one of the following holds.
\begin{itemize}
    \item[(1)] For some $n$ we have $R\subseteq^*\bigcup_{i<n}F_i$.\\
        This can also be written as $F_\sigma=\bigcap_{i<n}\overline F_i\subseteq^*\overline R$, where $\sigma$ has 0 for every bit.
    \item[(2)] There is an $e$ and finite set of indices $B$ such that $F_e\subseteq^*R\cup\bigcup_{i\in B}F_i$.\\
        Similarly, this can be written as $F_\sigma=\overline F_e\cap\bigcap_{i\in B}F_i\subseteq^*R$, where $\sigma$ has 1 for at most one bit.
\end{itemize}
Now we restrict $\sigma$ to only have the forms described above, and as in Theorem \ref{lmns4.2} we get exactly as before
\begin{align*}
    \varphi_e\text{ total}
    &\iff\exists\sigma\exists n(F_\sigma\smallsetminus[0,n]\subseteq X\text{ for some }X\in P_e\cup Q_e)\\
    &\iff\exists\sigma\exists n(P_e\cap S_{F_\sigma\smallsetminus[0,n]}\neq\emptyset\,\,\,\lor\,\,\,Q_e\cap S_{F_\sigma\smallsetminus[0,n]}\neq\emptyset)
\end{align*}
Again we inspect the first equivalence.
By the conditions on computable sets above, the forward implication follows just as in Theorem \ref{lmns4.2}. 
The reverse implication is now weaker, a sufficient $\sigma$ which meets our restrictions also works just fine in the general case, so the reverse implication holds.
As we have already seen this is sufficient to provide a Turing functional which computes a dominating function from our maximal ideal independent family.
\end{proof}
\pagebreak
\begin{theorem}
    \label{iidbelowdf}
    There is a fixed Turing functional that from every dominating function computes a maximal ideal independent family.
    That is, $\mathcal I^{id}\le_s\mathrm{DomFcn}$.
\end{theorem}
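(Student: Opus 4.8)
The plan is to mirror the framework of Theorem~\ref{lmns4.3}. Fix the listing $\langle\psi_e\rangle_{e\in\omega}$ of partial computable binary functions on initial segments, giving the candidate sets $V_e$ (infinite and coinfinite exactly when $\psi_e$ is total and yields such a set) together with their approximations $V_{e,s}$. These $V_e$ enumerate every computable set, so to obtain maximality it suffices, for each infinite coinfinite computable $R=V_e$, to force one of the two disjuncts in the characterisation of maximal ideal independence. Given $h\in\mathrm{DomFcn}$, I would build computable sets $F_e$ in phases, where at phase $e$ the sets $F_0,\dots,F_{e-1}$ are already available as computable sets. The target is a \emph{pairwise almost disjoint} family: since every almost disjoint family is ideal independent, this yields ideal independence for free, and it remains only to secure the maximality disjuncts. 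Note that although the family will be almost disjoint, maximal ideal independence does not follow from this alone (a general MAD family need not be maximal ideal independent); it is the explicit forcing of the disjuncts, possible because the family is not uniformly computable, that does the work.

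The per-phase strategy is as follows. At phase $e$, I would attempt to build $F_e$ as a sparse infinite subset of $V_e\smallsetminus\bigcup_{i<e}F_i$, using $h$ to bound the search for successive elements: to admit the next element I search for a fresh witness in $V_{e,s}\smallsetminus\bigcup_{i<e}F_i$ for at most $h$-many stages, and if none appears within that bound I instead add a fresh element lying outside $\bigcup_{i<e}F_i$ so as to keep $F_e$ infinite. Keeping each $F_e$ sparse, as in the earlier MAD construction (e.g.\ admitting only elements above a growing threshold), preserves coinfiniteness of every finite union $\bigcup_{i<e}F_i$, which is what makes the fresh witnesses available. If $V_e\smallsetminus\bigcup_{i<e}F_i$ is infinite then cofinitely many admitted elements lie in $V_e$, so $F_e\subseteq^*V_e$ and disjunct~(2) holds with $B=\emptyset$; if it is finite (in particular whenever $\psi_e$ is partial) then $V_e\subseteq^*\bigcup_{i<e}F_i$, so disjunct~(1) holds at $n=e$ while the padding keeps $F_e$ infinite. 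In every case $F_e$ is almost disjoint from each earlier $F_i$, and since later phases draw from $V_j\smallsetminus\bigcup_{i<j}F_i$ they automatically avoid $F_e$, so the whole family is almost disjoint.

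For the verification I would argue, by induction on $e$ exactly as in Theorem~\ref{lmns4.3}, that each $F_e$ is computable: when $V_e\smallsetminus\bigcup_{i<e}F_i$ is infinite the genuine ``time to the next witness'' function is computable from the inductively computable sets $F_i$ and from $V_e$, hence dominated by $h$, so only finitely many padding steps occur and $F_e$ is a computable subset of $V_e$ modulo finitely much; the partial and swallowed cases are handled the same way, with the padding eventually taking over. Ideal independence then follows from almost disjointness, and maximality follows since every infinite coinfinite computable $R$ is some $V_e$ and is killed by the appropriate disjunct. Because the construction is uniform in $h$, a single Turing functional witnesses $\mathcal{I}^{id}\le_s\mathrm{DomFcn}$.

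The main obstacle I anticipate is precisely the $h$-driven self-correction between the two cases, together with the accompanying computability argument. The construction cannot decide in advance whether $V_e\smallsetminus\bigcup_{i<e}F_i$ is infinite, yet it must simultaneously guarantee $F_e\subseteq^*V_e$ in the infinite case and an infinite $F_e$ in the finite or partial case via padding that does \emph{not} spoil $\subseteq^*V_e$ in the former. Getting the threshold and padding bookkeeping right, so that under domination by $h$ the padding is finite exactly when $V_e\smallsetminus\bigcup_{i<e}F_i$ is infinite, is the delicate point, and it parallels the computability verification carried out for the independent family in Theorem~\ref{lmns4.3}.
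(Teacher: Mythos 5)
Your proposal is correct and follows essentially the same route as the paper's proof: an inductive, phase-by-phase construction of a pairwise disjoint family (giving ideal independence for free), with $h$ bounding the search for witnesses in $V_e\smallsetminus\bigcup_{i<e}F_i$ and padding otherwise, maximality secured via the two disjuncts, and computability of each $F_e$ obtained by arguing that the true witness-search function is computable and hence dominated by $h$. The only differences are cosmetic (the paper organises the search into explicit intervals $[r^e_n,r^e_{n+1})$ and keeps $F_{<e}$ coinfinite by admitting one element and omitting one per interval, rather than via a sparseness threshold).
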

\begin{proof}
The setting is the same as in Theorem \ref{lmns4.3}.
Let $\langle\psi_e\rangle_{e\in\omega}$ be an effective listing of the binary valued partial computable functions which are defined on an initial segment of $\omega$.
Let $V_{e,k}=\{x:\psi_e(x)=k\}$ for $k\in\{0,1\}$.
Let $h\in\mathrm{DomFcn}$.
\par
We define $F$ inductively in $e$, so each $F_e$ is constructed in terms of the $F_i$ for $i<e$ which have already been constructed.
We use the notation $F_{<e}=\bigcup_{i<e}F_i$, and in constructing $F$ we ensure that for all $e$ we have $F_e\cap F_{<e}=\emptyset$, so $F$ is pairwise disjoint and hence ideal independent.
We observe the sufficient conditions for a maximally ideal independent family used in Theorem \ref{dfbelowiid}.
As we build $F_e$, if $V_{e,0}$ appears to lie in $F_{<e}$ then we just need to maintain independence as the first condition holds.
Otherwise we follow $V_{e,0}$ so that we satisfy the second condition, introducing a new dependence when $V_{e,0}$ is added.
\\\\
\emph{Construction:} To build $F_e$, we assume that for $i<e$ we already have $F_i$.
These~$F_i$ are disjoint, computable, and their union $F_{<e}$ is coinfinite.
We use $h$ to define a sequence~$\langle r^e_n\rangle_{n\in\omega}$.
Set $r^e_0=0$. Define $r^e_{n+1}$ in terms of $r^e_n$ to be the least $r$ such that:
\begin{align*}
    \text{(a)  } & \text{if there are $x,y$ with $r^e_n\le x<y$ and $x,y\in \operatorname{dom}(\psi_{e,h(r^e_n)})\smallsetminus F_{<e}$ then $r>y+1$}\\
    \text{(b)  } & |[r^e_n,r)\smallsetminus F_{<e}|\ge2
\end{align*}
Note that (a) implies (b) when the assumption in (a) holds.
If the assumption of (a) holds, then we put $x$ into $F_e$.
Otherwise we put $\min([r^e_n,r^e_{n+1})\smallsetminus F_{<e})$ into $F_e$.
At each step here we put in one element and leave one out, all outside of $F_{<e}$.
As $\langle r^e_n\rangle_{n\in\omega}$ is infinite, we then have that $F_e$ is infinite, and $ F_{<e+1}$ is coinfinite.
\par
This process is determined only by $h$ so there is a Turing functional $\Theta_e$, determined uniformly in $e$, such that $F_e=\Theta^h_e$.
\\\\
\emph{Verification:} We verify this construction for $F$ has the desired properies. As all elements put into $F_e$ are in the complement of $F_{<e}$, $F$ is pairwise disjoint and hence ideal independent.
\par
\begin{claim}
    Each set $F_e$ is computable.
\end{claim}
We define $p_e(n)$ to be the least stage where $V_{e,0}$ `sees' $x$ and $y$.
More formally, $p_e(n)$ is the least $s$ where $[n,s)$ witnesses (b) and there are $x,y\in V_{e,0,s}$ such that $n\le x<y$ that are suitable for $(a)$ (if $n$ is not of the form $r^e_n$ then just let $p_e(n)=0$).
This is computable, so as $h$ eventually dominates $p_e$, for some $n$ we will be defining $r^e_{n+1}$ by checking $V_{e,0}$ at some stage $h(r_n)\ge p_e(r^e_n)$.
We chose $r^e_{n+1}$ to be minimal so we might as well check at stage $p_e(r^e_n)$, so $\langle r^e_n\rangle_{n\in\omega}$ is computable, and hence so is $F_e$.
\begin{claim}
    $F$ is a maximal ideal independent family.
\end{claim}
It is sufficient for maximality that whenever $V_{e,0}\cup V_{e,1}=\omega$ (i.e.\ $\psi_e$ is total) then $V_{e,0}\not\subseteq^* F_{<e}$ implies that $F_e\subseteq^*V_{e,0}$.
If $V_{e,0}$ is not coinfinite then the consequent is always true, and if it is not infinite then the antecedent is always false, so henceforth we assume $V_{e,0}$ is infinite and co-infinite.
Suppose $V_{e,0}\not\subseteq^* F_{<e}$, so $V_{e,0}\smallsetminus F_{<e}$ is infinite.
Hence for any $r^e_n$ there will by a stage $p_e(r^e_n)$ that is sufficient to check for the antecedent in (a) to hold.
Then $h(r^e_n)>p_e(r^e_n)$ for sufficiently large $n$, so there are only finitely many $n$ where the assumption in (a) fails and elements outside of $V_{e,0}$ are added to $F_e$.
Hence $F_e\subseteq^*V_{e,0}$.
\end{proof}

\subsection{The Boolean algebra of $K$-trivial sets}
Again, we follow the treatment of Nies \cite{Nies:2008}.
A Turing machine $M$ is called \emph{prefix-free} if its domain is prefix-free, so for any strings $\sigma,\rho$ in the domain of $M$, we have $\sigma\preceq\rho\rightarrow\sigma=\rho$.
The shortest input to a prefix-free machine $M$ that gives an output $x$ is 
$$K_M(x)=\min\{|\sigma|:M(\sigma)=x\}.$$
$K$ is used to distinguish from $C$ where the machine is not required to be prefix-free.
A prefix-free machine $R$ is called \emph{optimal} if for every machine $M$
$$\forall x[K_R(x)\le K_M(x)+d_M]$$
for some constant $d_M$.
We fix an arbitrary optimal prefix-free machine $R$, then abbreviate $K(x)=K_R(x)$ as the \emph{prefix-free complexity} of $x$.
\par
A set $A$ is called \emph{$K$-trivial} if there is some fixed constant $b$ such that $$\forall n[K(A\restriction n)\le K(n)+b].$$
Here, we identify finite sets with their strong indices.
This says that the complexity of initial segments of $A$ grows about as slowly as possible, so the $K$-trivial sets lie very close to the computable sets.
\par
The $K$-trivial sets form a Boolean algebra so we can study the complexity of certain families of them as we have done for the computable sets.
Lempp et al.\ \cite[Th.\ 6.1]{LMNS:2023} show that there is a $\Delta^0_2$ ultrafilter base for the $K$-trivial sets.
The following result adapts this to the setting of maximal ideal independent families.
\begin{theorem}
    \label{d02miiktriv}
    There is a maximal ideal independent family $F$ for the Boolean algebra of the $K$-trivials such that $F$ is $\Delta^0_2$.
\end{theorem}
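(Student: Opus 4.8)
The plan is to combine the pairwise-disjoint construction from Theorem~\ref{iidbelowdf} with the cost-function technique behind the $\Delta^0_2$ ultrafilter base for the $K$-trivials of Lempp et al.\ \cite[Th.\ 6.1]{LMNS:2023}. As in Theorem~\ref{iidbelowdf}, I would build $F=\langle F_e\rangle_{e\in\omega}$ so that $F_e\cap F_{<e}=\emptyset$ for all $e$, where $F_{<e}=\bigcup_{i<e}F_i$; this makes $F$ pairwise disjoint and hence automatically ideal independent, reducing the task to (i) securing maximality against every $K$-trivial target, (ii) making each column $F_e$ itself $K$-trivial, and (iii) keeping the whole sequence $\Delta^0_2$. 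The novelty relative to Theorem~\ref{iidbelowdf} is that an element is admitted into a column only while it is \emph{cheap} for the standard cost function $c(x,s)=\sum_{x<w\le s}2^{-K_s(w)}$, so that each column obeys $c$ with finite total cost and is therefore $K$-trivial by the cost-function characterisation of $K$-triviality \cite{Nies:2008}.

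Concretely, I would fix an effective listing $\langle M_e\rangle_{e\in\omega}$ that contains every $K$-trivial set, together with uniformly $\emptyset'$-computable approximations $\langle M_{e,s}\rangle$; such a listing underlies the Lempp et al.\ construction and exists because every $K$-trivial set is low, hence $\Delta^0_2$. The construction then runs relative to $\emptyset'$. Mirroring the sufficient conditions for maximal ideal independence recorded before Theorem~\ref{dfbelowiid}, I would aim, for each infinite coinfinite $M_e$, to make $F_e\subseteq^*M_e$ whenever $M_e\smallsetminus F_{<e}$ is infinite. To do this I follow the interval scheme of Theorem~\ref{iidbelowdf}, but using a dominating function $h\le_T\emptyset'$ to time the search and admitting at the $n$-th slot of $F_e$ only an element $x>\max(F_e)$ with $x\notin F_{<e}$ and $c(x)\le 2^{-n}$, preferring a genuine element of $M_e$ when the approximation and $\emptyset'$ reveal one and otherwise taking any such cheap $x$ to keep $F_e$ infinite. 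Because the choices are made with oracle $\emptyset'$, we get $F\le_T\emptyset'$, so $F$ is $\Delta^0_2$.

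For the verification I would check four points. Pairwise disjointness is immediate from $F_e\cap F_{<e}=\emptyset$ and yields ideal independence. Each $F_e$ is $K$-trivial because its elements $x^e_0<x^e_1<\cdots$ satisfy $c(x^e_n)\le 2^{-n}$, so the induced computable approximation obeys $c$ with total cost at most $\sum_n 2^{-n}<\infty$. That $F$ is $\Delta^0_2$ follows since the entire construction is computable in $\emptyset'$. Maximality is checked against the two-condition characterisation: if $M_e\smallsetminus F_{<e}$ is finite then $M_e\subseteq^*\bigcup_{i<e}F_i$ and we are in the first case, while if it is infinite then the timing by $h$ forces cofinitely many slots of $F_e$ to be filled by genuine members of $M_e$, so $F_e\subseteq^*M_e$ and we are in the second case.

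The hard part will be reconciling $K$-triviality with the demand that $F_e$ track the $\Delta^0_2$ target $M_e$: the cost budget forbids enumerating expensive elements, while deciding true membership in $M_e$ and detecting when $M_e\smallsetminus F_{<e}$ is infinite lie beyond $\emptyset'$ if approached directly. I would resolve this exactly as in the oblivious scheme of Theorem~\ref{iidbelowdf}: never decide infinitude, but keep adding and let a dominating function $h\le_T\emptyset'$ dominate the settling of the relevant $\emptyset'$-approximations, so the construction automatically ``catches up'' to $M_e$ in the infinite case. The limit condition $\lim_x\sup_s c(x,s)=0$ guarantees that cofinitely many elements are cheap, so a cheap witness above any bound always exists and the cost constraint never starves a column. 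A secondary point is to confirm that the listing $\langle M_e\rangle$ can genuinely be taken with $\emptyset'$-computable approximations uniformly in $e$; I would import this directly from the Lempp et al.\ ultrafilter-base construction rather than reprove it.
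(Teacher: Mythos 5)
There is a genuine gap, and it sits at the heart of your plan: the claim that each column $F_e$ is $K$-trivial ``because its elements $x^e_0<x^e_1<\cdots$ satisfy $c(x^e_n)\le 2^{-n}$'' misapplies the cost-function characterisation. That characterisation says a set is $K$-trivial iff it has a \emph{computable} approximation the total cost of whose \emph{changes} is finite; it is a constraint on every mind-change of the approximation (including all the wrong intermediate guesses), not on the limit elements. Your construction runs relative to $\emptyset'$ (to decide true membership in $M_e$ and to certify cheapness), so you have exhibited no computable approximation at all, let alone one with controlled cost. And cheapness of the final elements is simply not enough: since $\lim_x c(x)=0$, one can choose a sparse subset of $\emptyset'$ whose $n$-th element has limit cost below $2^{-n}$ and which still codes $\emptyset'$, hence is Turing-complete and far from $K$-trivial. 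Relatedly, a column $F_e\subseteq^*M_e$ is a subset of a $K$-trivial set, and subsets of $K$-trivial sets need not be $K$-trivial, so condition (ii) of your plan really is the crux and is not discharged. Repairing this along your lines would require a genuinely computable approximation to each $F_e$ whose every change (including retractions forced by the $\Delta^0_2$ behaviour of $M_{e,s}$ and by the $\emptyset'$-queries) is paid for, which is a substantially harder construction than the one sketched.

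The paper avoids cost functions entirely by arranging that each column is, up to finite difference, of the form $\ell(V_{e,k}\smallsetminus S_{\preceq\alpha})$, i.e.\ ``every other element of'' a Boolean combination of finitely many sets from the uniform listing $\langle V_{e,0},V_{e,1}\rangle$ of $K$-trivials and their complements. Such a set is Turing reducible to the join of finitely many $K$-trivial sets, and $K$-triviality is closed under join, under $\le_T$-downward closure, and under finite modification; so $K$-triviality of the columns is automatic. The $\ell$ operation (dropping every other element) is what leaves room for later columns while keeping each $S_{\preceq\alpha}$ coinfinite, replacing your cheap-element bookkeeping. The dominating function $h$ for functions computable from $K$-trivial sets is then used only to make the $\emptyset'$-computable construction lock onto the correct leftmost branch $g\restriction e$ of the tree of attempts, so that $F_e=^*S_{g\restriction e}$; your instinct to use domination to ``catch up'' in the infinite case is the right one, but it should be aimed at selecting among predefined $K$-trivial candidates rather than at certifying costs. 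Your reduction of maximality to the two-case characterisation and your use of pairwise disjointness for ideal independence are both correct and match the paper.
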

\begin{proof}
    Let $h$ be a function which dominates all $K$-trivial computable functions, as noted to exist by Ku\v cera and Slaman \cite{KS:2009}.
    Let $\langle V_{e,0},V_{e,1}\rangle$ be a uniform listing of the $K$-trivials and their complements given by wtt-reductions to $\emptyset'$.
    Such a listing is shown to exist by Downey et al.\ \cite{DHNS:2002}, their listing includes the constants $b$ but these are not necessary here.
    Let $T=\{0,1\}^{<\infty}$. 
    \par 
    Given any set $A=\{a_i:i\in I\}$ with $i<j\rightarrow a_i<a_j$ for any initial segment~$I$ of~$\omega$, we define $\ell(A)$ to be the set of elements of the form $a_{2i}$.
    For each $\alpha\in T$ of length $|\alpha|=e$ we define the set $S_\alpha$ to be one attempt at constructing $F_e$ for our maximal ideal independent family. 
    $S_\alpha$ will be $K$-trivial but not necessarily infinite.
    Of course, finite sets $S_\alpha$ will not be used in the constructed sequence $F$.
    Define $S_{\preceq\alpha}=\bigcup_{\beta\preceq\alpha}S_\beta$.
    We define $S_\emptyset=\ell(\omega)$ and $S_{\alpha\hatc k}=\ell(V_{e,k}\smallsetminus S_{\preceq\alpha})$ with $e=|\alpha|$, $k\in\{0,1\}$.
    This definition allows us to maintain disjointness from previous stages, and leave out infinitely many elements for future stages to fit into.
    \par
    We can see inductively that there is a path $g\in\{0,1\}^\omega$ such that $S_{g\restriction e}$ is infinite for all $e\in\omega$.
    $S_\emptyset$ is infinite, so let $e\in\omega$ and suppose there is $\alpha$ of length $e$ with $S_\alpha$ infinite, and such that $S_{\preceq\alpha}$ is coinfinite.
    If $V_{e,0}\not\subseteq^*S_{\preceq\alpha}$ then $S_{\alpha\hatc0}$ is coinfinite.
    If $V_{e,0}\subseteq^*S_{\preceq\alpha}$ then $V_{e,1}\smallsetminus S_{\preceq\alpha}$ is infinite as $S_{\preceq\alpha}$ is coinfinite, so $S_{\alpha\hatc1}$ is infinite.
    Take $S_{\alpha\hatc k}$ to be the infinite set, with preference given to the left path.
    Now we show that the next union is still coinfinite.
    $S_{\alpha\hatc k}\cup S_{\preceq\alpha}=\ell(V_{e,k}\smallsetminus S_{\preceq\alpha})\cup S_{\preceq\alpha}$ which is coinfinite as $S_{\preceq\alpha}$ is coinfinite and $V_{e,k}$ contains infinitely many elements of $S_{\preceq\alpha}$, of which infinitely many will be left out.
    \\\\
    \emph{Construction:}
    $F_e$ will be defined as $\{a^e_0,a^e_1,\cdots\}$ for an increasing sequence $\langle a^e_k\rangle_{k\in\omega}$ defined as follows.
    Define $a^e_0=0$. Given $a^e_{k-1}$ already defined we attempt to set $\alpha$ the leftmost possible with length $e$ such that there are at least $k+1$ elements in $S_\alpha$ that are less than $h(k)$.
    If such $\alpha$ exists then let $a^e_k$ be the $k$-th element of $S_\alpha$ or $a^e_{k-1}$, whichever is greater.
    If there is no such $\alpha$ let $a^e_k=a^e_{k-1}$.
    This process is computable in~$\emptyset'$, uniformly in $e$.
    \\\\
    \emph{Verification:}
    Let $g\in\{0,1\}^\omega$ be the leftmost path such that $S_{g\restriction e}$ is infinite for all~$e$.
    Fix $e$ and let $\alpha = g\restriction e$.
    Let $p(k)$ be the $(k+1)$-st element of $S_\alpha$.
    This is computable in a $K$-trivial set so it is dominated by $h$ so $F_e$ will eventually always pick $\alpha$ to pull its elements from, so $F_e=^*S_\alpha$.
    This confirms that each $F_e$ is $K$-trivial.
    \par
    By construction each $S_\alpha$ is disjoint from the preceeding $S_\beta$ sets, so $F$ is almost disjoint and hence ideal independent.
    Given an infinite $K$-trivial set $R$, let $e$ be an index such that $V_{e,0}=R$, let $\alpha=g\restriction(e-1)$.
    Suppose $R=V_{e,0}\not\subseteq^*F_{<e}=^*S_{\preceq\alpha}$ then $S_{\alpha\hatc0}=\ell(V_{e,0}\smallsetminus S_{\preceq\alpha})$ is infinite.
    Hence $g(e)=0$ and $F_e=^*S_{\alpha\hatc0}\subseteq V_{e,0}=R$.
    Using the conditions from Theorem \ref{dfbelowiid} we see that if (1) does not hold, then (2) must hold, and so $F$ is maximal ideal independent.
\end{proof}

\section{Conclusions}
The results of Lempp et al.\ \cite{LMNS:2023} and those in Section \ref{iif} show that the mass problem of maximal ideal independent families coincides with that of maximal independent families and ultrafilter bases in the Medvedev lattice.
The diagram adapted from Blass' survey \cite{Bla:2010} in Figure \ref{hasseccc} shows how rare it is for two cardinal characteristics to always be equal in ZFC.
Perhaps unfortunately, in the framework used by Lempp et al.\ for finding analogues of characteristics relating to families of sets, it seems that the separation between characteristics is often lost.
The equivalences shown in their paper and expanded on in Section \ref{iif} above give the following diagram:
\begin{figure}[H]
\begin{minipage}{.2\textwidth}
\begin{tikzpicture}
    \node (u) {$\mathfrak u$};
    \node (smm) [above of = u] {$\smm$};
    \node (i) [right of = smm] {$\mathfrak i$};
    \node (d) [right of = u] {$\mathfrak d$};
    \node (a) [left of = u] {$\mathfrak a$};
    \node (t) [below of = u] {$\mathfrak t$};

    \draw (smm) -- (u); \draw (smm) -- (d); \draw (i) -- (d);
    \draw (a) -- (t); \draw (u) -- (t); \draw (d) -- (t);
\end{tikzpicture}
\end{minipage}
\begin{minipage}{.2\textwidth}
\begin{tikzpicture}
    \node (uc) {$\mathcal U$};
    \node (smmc) [above of = uc] {$\mathcal I^{id}$};
    \node (ic) [right of = smmc] {$\mathcal I$};
    \node (dc) [right of = uc, text width=0.5cm] {$\mathrm{DomFcn}$};
    \node (tc) [below of = uc] {$\mathcal T$};
    \node (ac) [left of = tc] {$\mathcal A$};

    \draw[double] (smmc) -- (uc);\draw[double] (smmc) -- (ic);\draw[double] (dc) -- (ic);
    \draw[double] (dc) -- (uc);\draw[double] (ac) -- (tc);\draw[->] (uc) -- (tc);
    \draw[->] (uc) -- (ac);
\end{tikzpicture}
\end{minipage}
\centering
\caption{The collapse of relations between several analogues of cardinal characteristics}
\label{hasseccc2}
\end{figure}
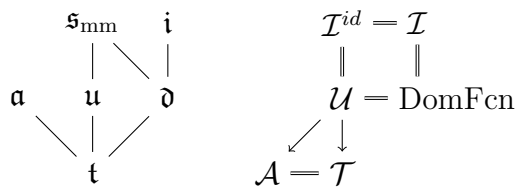
The original work on analogues of Cicho\'n's diagram by Rupprecht \cite{Rup:2010-1} and then Brendle et al.\ \cite{BBNN:2015} seems to only have this problem to a lesser extent, still retaining separation between many of the analogues.
This could suggest that the framework for analogues characteristics for families of sets is less effective in capturing the structure in the set theoretic setting or that there is some sense in which characteristics such as $\mathfrak u$ and $\mathfrak i$ are closer than independent characteristics in Cicho\'n's diagram.
\par
Other Boolean algebras, such as the $\omega$-c.a.\ and $K$-trivial sets mentioned in this dissertation, could prove interesting to look at in more depth.
The results discussed here regarding these concern only the complexities of certain families for them, but the relations between their respective mass problems could prove interesting.
Lempp et al.\ prove the equivalence between $\mathcal A$ and $\mathcal T$ for an arbitrary Boolean algebra, though most of their results are for the computable sets specifically.
It could be interesting to see which results hold for different Boolean algebras (or for arbitrary Boolean algebras) and how they relate to cardinal invariants on Boolean algebras as studied by Monk.
\par
There are still many cardinal characteristics that have not yet had analogues studied in the setting of computability, those mentioned in Blass' survey \cite{Bla:2010} but also in various other places in the literature, as was the case with $\smm$.
Also within the study of cardinal characteristics in set theory are higher cardinal characteristics between $\kappa^+$~and~$2^\kappa$ for $\kappa>\omega$.
Analogues of Cicho\'n's diagram for generalised Baire and Cantor spaces have been studied in this way by Baumhauer, Goldstern, and Shelah \cite{BGS:2020}; Brendle et al.\ \cite{BBFM:2018}; Brendle \cite{Bre:2022}; and Switzer \cite{Swi:2020-1}.
Their work provides a similarly rich structure as the case $\kappa=\omega$.
The rich structure of similar characteristics for higher cardinals could suggest that analogies in higher generalisations of computability may prove interesting alongside the computability case.

\addcontentsline{toc}{section}{References}
\printbibliography

\end{document}